\theoremstyle{plain}
\newtheorem{theorem}{Theorem}
\newtheorem{lemma}{Lemma}
\newtheorem{corollary}{Corollary}
\newtheorem{definition}{Definition}
\theoremstyle{definition}
\newcommand{\nonprint}[1]{}
\begin{document}

\begin{flushleft}

\vspace{+0.3cm}
\large

\begin{center}
\textbf{The solvability of inhomogeneous boundary-value problems in Sobolev spaces}\\
\end{center}

\begin{center}
\vspace{+0.4cm}
%\normalsize
\textbf{Vladimir Mikhailets, Olena Atlasiuk}
\vspace{+0.3cm}
\end{center}

\end{flushleft}

\normalsize

\begin{abstract}
The aim of the paper is to develop a general theory of solvability of linear inhomogeneous boundary-value problems for systems of ordinary differential equations of arbitrary order in Sobolev spaces. Boundary conditions are allowed to be overdetermined or underdetermined. They may contain derivatives, of the unknown vector-valued function, whose integer or fractional orders exceed the order of the differential equation. Similar problems arise naturally in various applications. The theory introduces the notion of a rectangular number characteristic matrix of the problem. The index and Fredholm numbers of this matrix coincide respectively with the index and Fredholm numbers of the inhomogeneous boundary-value problem. Unlike the index, the Fredholm numbers (i.e. the dimensions of the problem kernel and co-kernel) are unstable even with respect to small (in the norm) finite-dimensional  perturbations. We  give examples in which the characteristic matrix can be explicitly found. We also prove a limit theorem for a sequence of characteristic matrices. Specifically, it follows from this theorem that the Fredholm numbers of the problems under investigation are semicontinuous in the strong operator topology. Such a property ceases to be valid in the general case.
\end{abstract}

Mathematics Subject Classification (2010): 34B05, 47A53 \\

Keywords. Inhomogeneous boundary-value problem, Sobolev space, Fredholm operator, index of operator, Fredholm numbers of operator.

\section{Introduction and statements of the problems}\label{section1}

The study of systems of ordinary differential equations is an important part of many investigations in modern analysis and its applications (see, e.g., \cite{BochSAM2004} and references therein). Unlike Cauchy problems, the solutions to inhomogeneous boundary-value problems for differential systems may not exist and/or may not be unique. Therefore, the question about the solvability character of such problems is fundamental for the theory of differential equations. The question is most fully studied for linear ordinary differential equations. Thus, Kiguradze \cite{Kigyradze1975, Kigyradze1987} and Ashordia \cite{Ashordia} investigated the solutions of first order differential systems with general inhomogeneous boundary conditions of the form
\begin{equation}\label{kig}
y^{\prime}(t) + A(t)y(t)=f(t), \quad t\in(a,b), \quad By=c.
\end{equation}
Here, the matrix-valued function $A(\cdot)$ is Lebesgue integrable over the finite interval $(a, b)$; the vector-valued function $f(\cdot)$ belongs to $L\left((a, b); \mathbb{ R}^m\right)$; the vector $c$ pertains to $\mathbb{R}^{m}$, and $B$ is an arbitrary linear continuous operator from the Banach space $C\left([a,b]; \mathbb {R}^{m}\right)$ to $\mathbb{R}^{m}$, with $m \in \mathbb{N}$. The boundary condition in \eqref{kig} covers the main types of classical boundary conditions; namely: Cauchy problems, two-point and multipoint problems, integral and mixed problems. The Fredholm property with zero index was established for problems of the form \eqref{kig}. Moreover, the conditions for the problems to be well posed were obtained, and the limit theorem for their solutions was proved.

These results were further developed in a series of articles by Mikhailets and his disciples. Specifically, they allow the differential system to have an arbitrary order $r \in \mathbb{N}$ and the boundary operator $B$ to be  any linear continuous operator from the space $C^{r-1}\left ([a,b]; \mathbb{C}^{m}\right)$ to $\mathbb{C}^{rm}$. They obtained  conditions for the boundary-value problems to be well posed and proved limit theorems for solutions to these problems. These results generalize Kiguradze's theorems in the $r=1$ case. Moreover, limit theorems for Green's matrices of such boundary-value problems were established for the first time \cite{KodlyukMikR2013, MikhailetsChekhanova}. These results were applied to the analysis of multipoint boundary-value problems \cite{AtlasiukNelkol2019}, as well as to the spectral theory of differential operators with distributions in coefficients \cite{GoriunovMikhailets2010MN, GoriunovMikhailets2010MFAT, GoriunovMikhailets2012UMJ, GoriunovMikhailetsPankrashkin2013EJDE}.

Note that boundary-value problems with inhomogeneous boundary conditions containing derivatives whose order is greater than or equal to the order of the differential equation naturally arise in some mathematical models (see, e.g., \cite{Kra1961,Luo1991,Vent1959}). The theory of such problems so far contains few results even in the case of ordinary differential equations.

The present article investigates the solvability character of systems of linear ordinary differential equations with the most general inhomogeneous boundary conditions in Sobolev spaces. The boundary conditions may contain derivatives of the unknown functions of integer and fractional orders that exceed the order of the differential system. The systems can be underdetermined or overdetermined.

Let us introduce necessary notation to describe the problem under investigation. Throughout the paper, we arbitrarily choose a finite interval $(a,b)\subset\mathbb{R}$ and the following parameters:
$$
n\in\mathbb{N}\cup\{0\},\quad\{m, r, l\}\subset\mathbb{N},\quad\mbox{and}\quad 1\leq p\leq \infty.
$$
As usual,
\begin{align*}
&W_p^{n+r}\bigl([a,b];\mathbb{C})\\
&:= \bigl\{y\in C^{n+r-1}([a,b];\mathbb{C})\colon y^{(n+r-1)}\in AC[a,b], \, y^{(n+r)}\in L_p[a,b]\bigr\}
\end{align*}
is a complex Sobolev space; set $W_p^0:=L_p$. This space is Banach with respect to the norm
$$
\bigl\|y\bigr\|_{n+r,p}=\sum_{k=0}^{n+r}\bigl\|y^{(k)}\bigr\|_{p},
$$
with $\|\cdot\|_p$ standing for the norm in the Lebesgue space $L_p\bigl([a,b]; \mathbb{C}\bigr)$. We need the Sobolev spaces
$$
(W_p^{n+r})^{m}:=W_p^{n+r}\bigl([a,b];\mathbb{C}^{m}\bigr)
\;\;\mbox{and}\;\;
(W_p^{n+r})^{m\times m}:=W_p^{n+r}\bigl([a,b];\mathbb{C}^{m\times m}\bigr)
$$
They respectively consist of vector-valued functions and matrix-valued functions whose ele\-ments belong to $W_p^{n+r}$. The norms in these spaces are defined to be the sums of the relevant norms (in $W_p^{n+r}$) of all elements of a vector-valued or matrix-valued function. We preserve the same notation $\|\cdot\|_{n+r,p}$ for these norms. It will be clear from the context to which space (scalar or vector-valued or matrix-valued functions) relates the designation of the norm. The same concerns all other Banach spaces used in the sequel. Certainly, the above Sobolev spaces coincide in the $m=1$ case. If $p<\infty$, they are separable and have a Schauder basis.

We consider a linear boundary-value problem of the form
\begin{align}\label{bound_pr_1}
(Ly)(t):=y^{(r)}(t) + \sum\limits_{j=1}^rA_{r-j}(t)y^{(r-j)}(t)&=f(t), \quad t\in(a,b),\\
\label{bound_pr_2}
By&=c.
\end{align}
We suppose that the matrix-valued functions $A_{r-j}(\cdot) \in (W_p^n)^{m\times m}$, vector-valued function $f(\cdot) \in (W^n_p)^m$, vector $c \in \mathbb{C}^{l}$, linear continuous operator
\begin{equation}\label{oper_B_class}
B\colon (W^{n+r}_p)^m\rightarrow\mathbb{C}^{l}
\end{equation}
are arbitrarily chosen and that the vector-valued function $y(\cdot)\in (W_{p}^{n+r})^m$ is unknown.

If $l<r$, then the boundary conditions are underdetermined; if $l>r$, then they are overdetermined.

The boundary condition \eqref{bound_pr_2} consists of $l$ scalar condition for system of $m$ differential equations of $r$-th order, we representing vectors and vector-valued functions as columns. A solution to the boundary-value problem \eqref{bound_pr_1}, \eqref{bound_pr_2} is understood as a vector-valued function $y(\cdot)\in (W_{p}^{n+r})^m$ that satisfies both equation \eqref{bound_pr_1} (everywhere if $n\geq 1$, and almost everywhere if $n=0$) on $(a,b)$ and equality \eqref{bound_pr_2}. If the parameter $n$ increases, so does the class of linear operators \eqref{oper_B_class}. When $n=0$, this class contains all operators that set the general boundary conditions described above.

It is known \cite{Ioffe} that, if $1\leq p < \infty$, then every operator \eqref{oper_B_class} admits a unique analytic representation
\begin{equation}\label{st anal}
By=\sum _{i=0}^{n+r-1} \alpha_{i}\,y^{(i)}(a)+\int_{a}^b \Phi(t)y^{(n+r)}(t){\rm d}t, \quad y(\cdot)\in (W_{p}^{n+r})^{m},
\end{equation}
for certain number matrices $\alpha_{s} \in \mathbb{C}^{l\times m}$ and  matrix-valued function $\Phi(\cdot)\in L_{p'}([a, b]; \mathbb{C}^{l\times m})$; as usual, $1/p + 1/p'=1$. If $p=\infty$, this formula also defines a bounded operator $B\colon (W_{\infty}^{n+r})^{m} \rightarrow \mathbb{C}^{l}$. However, there exist other operators of this class generated by integrals over finitely additive measures. Hence, unlike  $p<\infty$ (\cite{GKM2017},  \cite{Atl2}, \cite{KM2013}), the case of $p=\infty$ contains additional analytical difficulties.

The set of solutions to equation \eqref{bound_pr_1} coincides with the space $(W_{p}^{n+r})^m$ when the right-hand side of the equation runs over $(W_{p}^{n})^m$. Hence, the boundary condition \eqref{bound_pr_2} with an operator of the form \eqref{oper_B_class} is the most general for this equation.

The main purpose of this paper is to prove that the boundary-value problem \eqref{bound_pr_1}, \eqref{bound_pr_2} is Fredholm and to find its Fredholm numbers, i.e. the dimensions of its kernel and co-kernel. Along the way, we find the index of the problem. Note that, unlike the index, the Fredholm numbers are unstable with respect to one-dimensional additive perturbations with an arbitrarily small norm. To find these  numbers, we introduce a rectangular number characteristic matrix $M(L,B)$ of the boundary-value problem and prove that the Fredholm numbers of this matrix coincide with the Fredholm numbers of the problem. Furthermore, we find constructive sufficient conditions for the convergence of the sequence of characteristic matrices to the characteristic matrix of the given problem.

The paper is organized as follows:

Section \ref{section3} states the main results; namely: the formula for the index of the problem, the definition of the characteristic matrix, the formulas for the Fredholm numbers, the limit theorem for the characteristic matrices and some of its consequences.

Section \ref{section4} gives examples of differential systems with constant coefficients and boundary conditions for which the characteristic matrix is written in an explicit form.

Section \ref{section5} provides a proof of the formula for the index of the boundary-value problem.

Section \ref{section6} gives a proof of the theorem on the Fredholm numbers of the problem.

Section \ref{section7} contains proofs of the limit theorems for the characteristic matrices of a sequence of boundary-value problems.

The abstract result presented in the Appendix shows that the established limit theorem is of specific character for the considered class of inhomogeneous boundary-value problems.

\section{Solvability and characteristic matrix}\label{section3}

We now formulate the main results of the paper. They will be proved in Sections~\ref{section5}, \ref{section6}, and \ref{section7}.

We rewrite the inhomogeneous boundary-value problem \eqref{bound_pr_1}, \eqref{bound_pr_2} in the form of a linear operator equation
\[
(L,B)y=(f,c).
\]
Here, $(L,B)$ is a bounded linear operator on the pair of Banach spaces
\begin{equation}\label{(L,B)}
(L,B)\colon (W^{n+r}_p)^m\rightarrow (W^{n}_p)^m\times\mathbb{C}^l,
\end{equation}
which follows from the definition of the Sobolev spaces involved and from the fact that $W_p^n$ is a Banach algebra.

Let $E_{1}$ and $E_{2}$ be Banach spaces. A linear bounded operator $T\colon E_{1}\rightarrow E_{2}$ is called a Fredholm one if its kernel and co-kernel are finite-dimen\-si\-onal. If $T$ is a Fredholm operator, then its range $T(E_{1})$ is closed in $E_{2}$, and its index
$$
\mathrm{ind}\,T:=\dim\ker T-\dim\big(E_{2}/T(E_{1})\big)\in \mathbb{Z}
$$
is finite (see, e.g., \cite[Lemma~19.1.1]{Hermander1985}).

\begin{theorem}\label{th_fredh high}
The bounded linear operator \eqref{(L,B)} is a Fredholm one with index $rm-l$.
\end{theorem}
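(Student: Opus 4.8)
The plan is to factor the operator $(L,B)$ through a reduction to the ``trivial'' differential operator and then exploit the stability of the Fredholm property and index under compact perturbations and under composition with isomorphisms. First I would treat the differential part alone: the operator $L\colon (W^{n+r}_p)^m\to(W^n_p)^m$ sends $y$ to $y^{(r)}+\sum_{j=1}^r A_{r-j}y^{(r-j)}$. Because each $A_{r-j}\in(W^n_p)^{m\times m}$ and $W^n_p$ is a Banach algebra, the lower-order terms $y\mapsto\sum_j A_{r-j}y^{(r-j)}$ map $(W^{n+r}_p)^m$ into $(W^{n+1}_p)^m$, hence into $(W^n_p)^m$ with one extra derivative of smoothness; composing with the compact embedding $(W^{n+1}_p)^m\hookrightarrow(W^n_p)^m$ (Arzel\`a--Ascoli / Rellich-type compactness on the bounded interval $[a,b]$) shows this part is a compact operator $(W^{n+r}_p)^m\to(W^n_p)^m$. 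Therefore $L$ is a compact perturbation of the ``bare'' operator $y\mapsto y^{(r)}$, and it suffices to compute the index of the latter.

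Next I would pair the reduced differential operator with $B$. Consider first the model operator $(L_0,B)\colon (W^{n+r}_p)^m\to(W^n_p)^m\times\mathbb C^l$ where $L_0 y:=y^{(r)}$. The map $y\mapsto y^{(r)}$ is surjective from $(W^{n+r}_p)^m$ onto $(W^n_p)^m$ with kernel the vector-valued polynomials of degree $<r$, a space of dimension $rm$; explicitly one has the splitting $y=P y+(\text{$r$-fold integral of }y^{(r)})$, where $Py$ collects the Taylor data $y(a),\dots,y^{(r-1)}(a)$. Using this splitting, $(L_0,B)$ is equivalent (via an isomorphism of $(W^{n+r}_p)^m$ onto $\mathbb C^{rm}\oplus(W^n_p)^m$) to an operator of block form $\begin{pmatrix}0&\mathrm{Id}\\ \ast&\ast\end{pmatrix}$ from $\mathbb C^{rm}\oplus(W^n_p)^m$ to $(W^n_p)^m\oplus\mathbb C^l$; since the ``$\mathrm{Id}$'' block is an isomorphism onto the first summand, a standard triangular/Schur-complement argument reduces the Fredholm analysis to the finite-dimensional operator $\mathbb C^{rm}\to\mathbb C^l$ given by the restriction of $B$ to polynomials of degree $<r$. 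Any linear map $\mathbb C^{rm}\to\mathbb C^l$ is Fredholm with index $rm-l$, so $(L_0,B)$ is Fredholm with index $rm-l$.

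Finally I would assemble: the difference $(L,B)-(L_0,B)=(L-L_0,0)$ is compact by the first step, and compact perturbations preserve both the Fredholm property and the index, hence $(L,B)$ is Fredholm with $\operatorname{ind}(L,B)=\operatorname{ind}(L_0,B)=rm-l$. Closedness of the range and finiteness of the index then follow from the general Fredholm theory quoted in the excerpt (\cite[Lemma~19.1.1]{Hermander1985}).

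The main obstacle I anticipate is making the ``triangular reduction'' of $(L_0,B)$ rigorous at the level of Banach-space operators rather than just heuristically: one must check that the substitution $y\leftrightarrow(Py,y^{(r)})$ really is a topological isomorphism $(W^{n+r}_p)^m\cong\mathbb C^{rm}\oplus(W^n_p)^m$, that $B$ is continuous on each summand separately (which is where the analytic representation \eqref{st anal} of $B$ is useful, giving $By=\sum_i\alpha_i y^{(i)}(a)+\int_a^b\Phi\,y^{(n+r)}$, so $B$ splits cleanly into a finite-rank ``initial-data'' part and an integral part acting on $y^{(r)}$), and that the resulting $2\times2$ block operator is genuinely a Fredholm operator with the claimed index — this is where one invokes that a bounded operator which is Fredholm modulo a finite-dimensional correction, with one diagonal block invertible, inherits index equal to the index of the Schur complement. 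A secondary technical point is the compactness claim for $p=\infty$, where the embedding $W^{n+1}_\infty\hookrightarrow W^n_\infty$ is still compact on a bounded interval by Arzel\`a--Ascoli, so no essential difficulty arises there, but it should be stated.
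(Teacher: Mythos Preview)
Your overall strategy---reduce to a model operator and invoke stability of the Fredholm index under compact perturbations---is sound and does lead to the theorem, but there is a concrete error in the compactness step, and the route differs from the paper's.

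\textbf{The gap.} The assertion that $y\mapsto\sum_j A_{r-j}y^{(r-j)}$ maps $(W^{n+r}_p)^m$ into $(W^{n+1}_p)^m$ is false. The coefficients $A_{r-j}$ lie only in $(W^n_p)^{m\times m}$, and multiplication by such a function cannot raise regularity above $W^n_p$: with $n=0$, $r=1$, $m=1$, take any $A_0\in L_p\setminus W^1_p$ and the constant function $y\equiv 1$; then $A_0 y=A_0\notin W^1_p$. Your compactness conclusion is nevertheless correct, but the factoring must go the other way around: $y\mapsto y^{(r-j)}$ is bounded from $W^{n+r}_p$ to $W^{n+j}_p$; since $j\ge1$ the embedding $W^{n+j}_p\hookrightarrow C^n[a,b]$ is compact; and multiplication by $A_{r-j}\in(W^n_p)^{m\times m}$ is bounded from $(C^n)^m$ into $(W^n_p)^m$ (by Leibniz, each derivative of the product is a finite sum of $L_p$--$L_\infty$ products). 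With this correction the rest of your argument goes through.

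\textbf{Comparison with the paper.} The paper perturbs the other factor: it keeps the full operator $L$ and replaces $B$ by the Cauchy-data operator $Cy=\operatorname{col}\bigl(y(a),\dots,y^{(r-1)}(a)\bigr)$. Lemma~\ref{ob Koshi} shows that $(L,C)$ is an isomorphism $(W^{n+r}_p)^m\to(W^n_p)^m\times\mathbb{C}^{rm}$; when $l=rm$ one writes $(L,B)=(L,C)+(0,B-C)$, and the perturbation has \emph{finite rank} (its range sits in $\{0\}\times\mathbb{C}^{rm}$), so no compact-embedding argument is needed at all. For $l\neq rm$ the paper pads $C$ with zeros or truncates it by a projection and repeats the trick, treating three cases. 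What the paper's route buys is that compactness of the perturbation is trivial and uniform in $p$; what your route buys is that the index $rm-l$ appears in one stroke as the index of the restriction of $B$ to $\ker L_0\cong\mathbb{C}^{rm}$, without case distinctions---and that restriction is exactly the characteristic matrix $M(L_0,B)$ of Definition~\ref{defin_harm}. Finally, your worry about needing the representation~\eqref{st anal} to split $B$ is unnecessary: once the isomorphism $(W^{n+r}_p)^m\cong\mathbb{C}^{rm}\times(W^n_p)^m$ is established (this is precisely the $L_0$-case of Lemma~\ref{ob Koshi}), any bounded $B$ splits automatically as a pair of bounded maps on the two summands, so the $p=\infty$ subtlety does not intervene.
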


The proof of Theorem \ref{th_fredh high} uses the well-known theorem on the stability of the index of a linear operator with respect to compact additive perturbations (cf. \cite{AtlMikh2018}).

Theorem \ref{th_fredh high} naturally raises the question of finding the Fredholm numbers of the operator $(L, B)$, i.e. $\operatorname{dim} \operatorname{ker}(L, B)$ and $\operatorname{dim} \operatorname{coker}(L, B)$. This is a quite difficult task because the Fredholm numbers may vary even with arbitrarily small one-dimensional additive perturbations.

To formulate the following result, let us introduce some notation and definitions.

For each number $i \in \{1,\dots, r\}$, we consider the family of matrix Cauchy problems:
\begin{equation}\label{zad kosh1}
Y_i^{(r)}(t)+\sum\limits_{j=1}^rA_{r-j}(t)Y_i^{(r-j)}(t)=O_{m},\quad t\in (a,b),
\end{equation}
with the initial conditions
\begin{equation}\label{zad kosh2}
Y_i^{(j-1)}(a) = \delta_{i,j}I_m,\quad j \in \{1,\dots, r\},%\label{Y_k^(j)(a)}
\end{equation}
where $Y_i(\cdot)$ is an unknown $m\times m$ matrix-valued function. As usual, $O_{m}$ stands for the zero $m\times m$ matrix, $I_{m}$ denotes the identity $m\times m$ matrix, and $\delta_{i,j}$ is the Kronecker delta. Each Cauchy problem \eqref{zad kosh1}, \eqref{zad kosh2} has a unique solution $Y_i\in(W_p^{n+r})^{m\times m}$ due to Lemma~\ref{ob Koshi} given in Section~\ref{section5}. Certainly, if $r=1$, we use the designation  $Y(\cdot)$ for $Y_1(\cdot)$.

Let $\left[BY_i\right]$ denote the number $l\times m$ matrix whose $j$-th column is the result of the action of $B$ on the $j$-th column of the matrix-valued function~$Y_i$.

\begin{definition}\label{defin_harm}
A bloc rectangular number matrix
\begin{equation}\label{matrix_BY}
M(L,B):=\big(\left[BY_1\right],\dots,\left[BY_{r}\right]\big) \in \mathbb{C}^{l\times rm}
\end{equation}
is called the characteristic matrix of the inhomogeneous boundary-value problem \eqref{bound_pr_1}, \eqref{bound_pr_2}. Note that this matrix consists of $r$ rectangular block columns $\left[BY_k\right]\in \mathbb{C}^{m\times l}$.
\end{definition}

Here, $mr$ is the number of scalar differential equations of the system \eqref{bound_pr_1}, and $l$ is the number of scalar boundary conditions in \eqref{bound_pr_2}.

\begin{theorem}\label{th dimker}
The dimensions of the kernel and co-kernel of the operator \eqref{(L,B)} are equal to the dimensions of the kernel and co-kernel of the characteristic matrix \eqref{matrix_BY}, resp; i.e.,
\begin{gather}\label{dimker}
\operatorname{dim}\operatorname{ker}(L,B)=
\operatorname{dim}\operatorname{ker}M(L,B),\\
\label{dimcoker1}
\operatorname{dim} \operatorname{coker}(L,B)=\operatorname{dim} \operatorname{coker}M(L,B).
\end{gather}
\end{theorem}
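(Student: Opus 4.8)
The plan is to build an explicit isomorphism between $\ker(L,B)$ and $\ker M(L,B)$, and then deduce the co-kernel statement from the index formula of Theorem~\ref{th_fredh high} together with the elementary rank--nullity identity for the rectangular matrix $M(L,B)$. The starting point is the description of the general solution of the homogeneous differential equation $Ly=0$. By Lemma~\ref{ob Koshi} (invoked earlier) every solution $y\in(W_p^{n+r})^m$ of $Ly=0$ is uniquely determined by its Cauchy data $\bigl(y(a),y'(a),\dots,y^{(r-1)}(a)\bigr)\in(\mathbb{C}^m)^r$, and conversely every such $r$-tuple of vectors arises from a unique solution. Concretely, writing the Cauchy data as $(c_1,\dots,c_r)$ with $c_i\in\mathbb{C}^m$, the corresponding solution is $y=\sum_{i=1}^r Y_i(\cdot)\,c_i$, where $Y_i$ are the matrix-valued solutions of \eqref{zad kosh1}, \eqref{zad kosh2}: this follows because $Y_i^{(j-1)}(a)=\delta_{i,j}I_m$, so $y^{(j-1)}(a)=c_j$, and $Ly=\sum_i(LY_i)c_i=0$ by linearity. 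Thus the map $(c_1,\dots,c_r)\mapsto\sum_i Y_i c_i$ is a linear isomorphism from $\mathbb{C}^{rm}$ onto $\{y\in(W_p^{n+r})^m : Ly=0\}$.

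Next I would intersect with the boundary condition. An element $y=\sum_i Y_i c_i$ lies in $\ker(L,B)$ iff additionally $By=0$. Using linearity of $B$ and the definition of $[BY_i]$ (the $l\times m$ matrix whose columns are $B$ applied to the columns of $Y_i$), one computes $By=\sum_{i=1}^r [BY_i]\,c_i = M(L,B)\,\begin{pmatrix} c_1\\ \vdots\\ c_r\end{pmatrix}$, where on the right the $c_i$ are stacked into a single vector in $\mathbb{C}^{rm}$. Hence under the isomorphism of the previous paragraph, $\ker(L,B)$ corresponds exactly to $\ker M(L,B)\subset\mathbb{C}^{rm}$, which gives \eqref{dimker}. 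This step is essentially bookkeeping once the correct correspondence is set up; the only point requiring a little care is checking that $B$ commutes with forming finite linear combinations of columns, which is immediate from linearity of $B$.

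For the co-kernel identity \eqref{dimcoker1}, I would argue by index. By Theorem~\ref{th_fredh high}, $\operatorname{ind}(L,B)=rm-l$, so
\[
\dim\operatorname{coker}(L,B)=\dim\ker(L,B)-(rm-l)=\dim\ker M(L,B)-rm+l,
\]
using \eqref{dimker}. On the other hand $M(L,B)\in\mathbb{C}^{l\times rm}$ is a finite matrix, viewed as the operator $\mathbb{C}^{rm}\to\mathbb{C}^l$; rank--nullity gives $\dim\ker M(L,B)=rm-\operatorname{rank}M(L,B)$, while $\dim\operatorname{coker}M(L,B)=l-\operatorname{rank}M(L,B)$. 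Subtracting, $\dim\ker M(L,B)-\dim\operatorname{coker}M(L,B)=rm-l$, i.e. $\dim\operatorname{coker}M(L,B)=\dim\ker M(L,B)-rm+l$, which matches the displayed expression. Hence $\dim\operatorname{coker}(L,B)=\dim\operatorname{coker}M(L,B)$.

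The only genuine obstacle is the first paragraph: one must be sure that Lemma~\ref{ob Koshi} really delivers both existence and uniqueness of the $Y_i$ in the right Sobolev class $(W_p^{n+r})^{m\times m}$, and that the parametrization of $\ker L$ by Cauchy data is a bijection onto the full solution space (no solutions are lost, e.g. through regularity issues at the endpoints). Given that, the argument is routine. An alternative to the index-based proof of \eqref{dimcoker1} would be to identify $\operatorname{coker}(L,B)$ directly: show that $(f,c)\in\operatorname{ran}(L,B)$ iff a certain solvability condition holds that depends only on the residual vector $c-B\tilde y$ modulo $\operatorname{ran}M(L,B)$, where $\tilde y$ is any particular solution of $L\tilde y=f$ (such $\tilde y$ exists since $L$ is onto, as remarked in the excerpt). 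This would give \eqref{dimcoker1} without invoking the index, but it is more laborious, so I would present the index route as the main argument and mention the direct one only if a self-contained proof is desired.
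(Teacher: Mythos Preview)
Your proposal is correct and follows essentially the same route as the paper: establish the isomorphism $\ker L\cong\mathbb{C}^{rm}$ via Cauchy data (the paper uses the map $C$ of Lemma~\ref{ob Koshi}, you use its inverse $q\mapsto\sum_i Y_i q_i$), compute $By=M(L,B)q$ to obtain \eqref{dimker}, and then derive \eqref{dimcoker1} from Theorem~\ref{th_fredh high} and the index of the rectangular matrix. The only cosmetic difference is that the paper packages the identity $B(Y_i c_i)=[BY_i]c_i$ as a separate Lemma~\ref{dija oper priam}, while you (rightly) note it is immediate from linearity.
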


Theorem \ref{th dimker} implies the following necessary and sufficient conditions for the invertibility of \eqref{(L,B)}:

\begin{corollary}\label{invertible}
The operator \eqref{(L,B)} is invertible if and only if $l=rm$ and the square matrix $M(L,B)$ is nonsingular.
\end{corollary}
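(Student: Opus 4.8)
The plan is to derive Corollary~\ref{invertible} directly from Theorem~\ref{th dimker} together with the index formula of Theorem~\ref{th_fredh high}. Recall that a Fredholm operator $T$ between Banach spaces is invertible if and only if it is bijective, i.e. $\dim\ker T=0$ and $\dim\operatorname{coker}T=0$; indeed, injectivity plus surjectivity of a bounded operator between Banach spaces yields a bounded inverse by the open mapping theorem (the range is automatically closed here, being all of $E_2$). So the task reduces to characterising when both Fredholm numbers of $(L,B)$ vanish.

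The first step is to apply Theorem~\ref{th dimker}: $(L,B)$ is bijective if and only if $\dim\ker M(L,B)=0$ and $\dim\operatorname{coker}M(L,B)=0$, that is, if and only if the rectangular matrix $M(L,B)\in\mathbb{C}^{l\times rm}$ has trivial kernel and trivial cokernel. Viewing $M(L,B)$ as a linear map $\mathbb{C}^{rm}\to\mathbb{C}^{l}$, triviality of the kernel forces $rm\le l$ (the map is injective), while triviality of the cokernel forces $l\le rm$ (the map is surjective). Hence both conditions together force $l=rm$, and in that case $M(L,B)$ is a square matrix that is simultaneously injective and surjective, i.e. nonsingular. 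Conversely, if $l=rm$ and $M(L,B)$ is nonsingular, then its kernel and cokernel are both $\{0\}$, so by Theorem~\ref{th dimker} the same holds for $(L,B)$, which is therefore invertible.

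Alternatively, and perhaps more cleanly, one can first invoke Theorem~\ref{th_fredh high}: if $(L,B)$ is invertible then $\operatorname{ind}(L,B)=0$, so $rm-l=0$, i.e. $l=rm$; then it remains only to note that with equal dimensions of kernel and cokernel, vanishing of one forces vanishing of the other, so invertibility is equivalent to $\dim\ker(L,B)=0$, which by \eqref{dimker} is equivalent to $\dim\ker M(L,B)=0$, i.e. to nonsingularity of the now-square matrix $M(L,B)$.

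There is essentially no obstacle here: the statement is a formal consequence of Theorems~\ref{th_fredh high} and~\ref{th dimker} combined with elementary linear algebra (a square matrix is nonsingular iff it has trivial kernel iff it has trivial cokernel) and the open mapping theorem (a bounded bijection between Banach spaces has bounded inverse). The only point requiring a line of care is the passage from rectangular to square: one must observe that injectivity of $M(L,B)$ as a map $\mathbb{C}^{rm}\to\mathbb{C}^{l}$ already forces $rm\le l$ and surjectivity forces $l\le rm$, so that the hypothesis $l=rm$ need not be assumed separately but is itself forced by invertibility — which is exactly why the corollary is stated as an ``if and only if''.
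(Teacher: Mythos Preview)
Your proof is correct and follows exactly the approach intended by the paper, which does not spell out a separate proof but simply presents Corollary~\ref{invertible} as an immediate consequence of Theorem~\ref{th dimker} (with Theorem~\ref{th_fredh high} supplying the index). Both of your variants---reducing via Theorem~\ref{th dimker} to the bijectivity of the rectangular matrix, or first using the index formula to force $l=rm$---are straightforward unpackings of that implication, and nothing more is needed.
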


In the $r=1$ case, Theorem~\ref{th_fredh high} and Corollary~\ref{invertible} are proved in~\cite{AtlMikh2018}. In the case where $l=rm$ and $p<\infty$, Corollary~\ref{invertible} is proved in~\cite{GnypKodMik15}. Theorem~\ref{th dimker} is also new for the systems of first order differential equations.

Together with the problem \eqref{bound_pr_1}, \eqref{bound_pr_2}, we consider a sequence of boun\-da\-ry-value problems
\begin{gather}\label{6.syste}
L(k)y(t,k):= y^{(r)}(t,k)+\sum_{j=1}^{r}A_{r-j}(t,k)y^{(r-j)}(t,k)=f(t,k),\\
B(k)y(\cdot,k)=c(k), \quad t\in (a,b), \quad k\in\mathbb{N},\label{6.kue}
\end{gather}
where the matrix-valued functions $A_{r-j}(\cdot,k)$, the vector-valued functions $f(\cdot,k)$, the vectors $c(k)$, and the linear continuous operators $B(k)$ satisfy the above conditions imposed on the problem \eqref{bound_pr_1}, \eqref{bound_pr_2}. We assume in the sequel that $k \in \mathbb{N}$ and that all asymptotic relations are considered for $k\rightarrow \infty$. The boundary-value problem \eqref{6.syste}, \eqref{6.kue} is also the most general (generic) with respect to the Sobolev space $W^{n+r}_p$.

We associate the sequence of linear continuous operators
\begin{equation}\label{L(e)_B(e)}
(L(k),B(k))\colon(W^{n+r}_p)^{m}\rightarrow (W^{n}_p)^{m}\times\mathbb{C}^{l}
\end{equation}
and the sequence of characteristic matrices
$$
M\big(L(k),B(k)\big):=
\big(\left[B(k)Y_1(k)\right],\dots,\left[B(k)Y_{r}(k)\right]\big) \subset \mathbb{C}^{l\times rm}
$$
with the boundary-value problems \eqref{6.syste}, \eqref{6.kue}.

As usual,
\begin{equation}\label{zb LB11}
\left(L(k),B(k)\right)\xrightarrow{s} \left(L,B\right)
\end{equation}
denotes the strong convergence of the sequence of operators $(L(k),B(k))$ to the operator~$(L,B)$.

Let us now formulate a sufficient condition for the convergence of the sequence of characteristic matrices $M\left(L(k),B(k)\right)$ to the matrix $M\left(L,B\right)$.

\begin{theorem}\label{koef matr}
If the sequence of operators $(L(k),B(k))$ converges strongly to the operator $(L,B)$, then the sequence of characteristic matrices $M\big(L(k),B(k)\big)$ converges to the matrix $M\big(L,B\big)$; i.e.,
\begin{equation*}\label{zb har m}
(L(k),B(k))\xrightarrow{s} (L,B)\;\Longrightarrow\; M(L(k),B(k))\rightarrow M(L,B).
\end{equation*}
\end{theorem}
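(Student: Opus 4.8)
The plan is to reduce the convergence $M(L(k),B(k))\to M(L,B)$ to two ingredients: (i) the convergence of the Cauchy solution matrices $Y_i(k)\to Y_i$ in the Sobolev norm $\|\cdot\|_{n+r,p}$, which follows from the strong convergence $L(k)y\xrightarrow{s}Ly$ together with a perturbation estimate for Cauchy problems; and (ii) the fact that the operators $B(k)$ act ``continuously and uniformly enough'' on this convergent sequence, which follows from $B(k)y\xrightarrow{s}By$ combined with the uniform boundedness principle. The entries of the characteristic matrix are precisely the numbers $[B(k)Y_i(k)]$ obtained by applying $B(k)$ columnwise to $Y_i(k)$, so once both convergences are in hand the conclusion is immediate from the elementary estimate
\[
\bigl\|[B(k)Y_i(k)] - [BY_i]\bigr\| \le \bigl\|B(k)\bigr\|\cdot\bigl\|Y_i(k)-Y_i\bigr\|_{n+r,p} + \bigl\|[B(k)Y_i] - [BY_i]\bigr\|,
\]
applied columnwise, where $\sup_k\|B(k)\|<\infty$ by the Banach--Steinhaus theorem.

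First I would fix $i\in\{1,\dots,r\}$ and show $Y_i(k)\to Y_i$ in $(W_p^{n+r})^{m\times m}$. The matrices $A_{r-j}(\cdot,k)$ converge to $A_{r-j}(\cdot)$ in $(W_p^n)^{m\times m}$ (this is part of what $L(k)\xrightarrow{s}L$ encodes, since $L(k)y - Ly = \sum_j (A_{r-j}(\cdot,k)-A_{r-j})y^{(r-j)}$ and one may test against suitable $y$; alternatively one reads it off directly from the hypothesis as is standard in the cited works \cite{KodlyukMikR2013}). Writing the Cauchy problem \eqref{zad kosh1}, \eqref{zad kosh2} as an equivalent integral (Volterra) equation in $(W_p^{n+r})^{m\times m}$, one sees that $Y_i(k)$ depends continuously on the coefficient tuple $(A_0(\cdot,k),\dots,A_{r-1}(\cdot,k))$ in the relevant norms; this continuous dependence is exactly the content of (the proof of) Lemma~\ref{ob Koshi}, so I would invoke that lemma together with the observation that $W_p^n$ is a Banach algebra. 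Hence $\|Y_i(k)-Y_i\|_{n+r,p}\to 0$.

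Next I would handle the operator side. Apply $B(k)$ and $B$ columnwise. For each fixed column $y$ of $Y_i$ we have $B(k)y\to By$ in $\mathbb{C}^l$ by the strong convergence hypothesis, so the second term in the displayed estimate tends to zero. For the first term, $\sup_k\|B(k)\| =: C < \infty$ by the uniform boundedness principle (the $B(k)$ are pointwise bounded since $B(k)y$ converges for every $y$), and this term is bounded by $C\,\|Y_i(k)-Y_i\|_{n+r,p}\to 0$ by the first step. Summing over the $m$ columns and over $i=1,\dots,r$ yields $M(L(k),B(k))\to M(L,B)$ entrywise, which is convergence in $\mathbb{C}^{l\times rm}$.

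The main obstacle is the first step: establishing $Y_i(k)\to Y_i$ in the full Sobolev norm $\|\cdot\|_{n+r,p}$ rather than merely in a weaker (e.g. $C^{r-1}$ or $L_p$) norm, because the derivatives up to order $n+r$ must be controlled, and these are governed by differentiating the equation $n$ times, which brings in derivatives of the coefficients $A_{r-j}$ and uses the Banach-algebra structure of $W_p^n$ in an essential way. This is precisely where Lemma~\ref{ob Koshi} — and the continuous-dependence refinement of it — does the real work; the rest is bookkeeping with the uniform boundedness principle. I would therefore state the needed continuous-dependence statement for Cauchy problems as an auxiliary lemma (or extract it from the proof of Lemma~\ref{ob Koshi}) and then assemble the two steps as above.
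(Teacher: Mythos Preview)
Your proposal is correct and follows essentially the same route as the paper: first extract coefficient convergence $A_{r-j}(\cdot,k)\to A_{r-j}(\cdot)$ in $(W_p^n)^{m\times m}$ from $L(k)\xrightarrow{s}L$ (this is the paper's Lemma~\ref{ekviv sul}), then deduce $Y_i(k)\to Y_i$ in $(W_p^{n+r})^{m\times m}$ by continuous dependence of the Cauchy problem on its coefficients (the paper isolates this as Lemma~\ref{ekviv rivnom}, proved via the homeomorphism result Lemma~\ref{th1} rather than by rereading Lemma~\ref{ob Koshi}), and finally combine with $B(k)\xrightarrow{s}B$ columnwise. Your explicit use of the Banach--Steinhaus theorem to bound $\sup_k\|B(k)\|$ in the triangle-inequality estimate actually spells out the final step more carefully than the paper's one-line conclusion does.
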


Theorem \ref{koef matr} implies the sufficient conditions for the upper semicontinuity of the dimensions of the kernel and co-kernel of \eqref{L(e)_B(e)}.

\begin{theorem}\label{ker coker}
If condition \eqref{zb LB11} is satisfied, then the following inequalities hold true for all sufficiently large $k$:
\begin{gather}
\operatorname{dim} \operatorname{ker}(L(k),B(k))\leq \operatorname{dim} \operatorname{ker}(L,B), \label{ner ker} \\
\operatorname{dim} \operatorname{coker}(L(k),B(k))\leq \operatorname{dim} \operatorname{coker}(L,B). \label{ner coker}
\end{gather}
\end{theorem}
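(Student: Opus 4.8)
The plan is to reduce the convergence of the Fredholm numbers to the already-established results on characteristic matrices, namely Theorem~\ref{th dimker} together with Theorem~\ref{koef matr}, and then to invoke the elementary lower semicontinuity of matrix rank. First I would apply Theorem~\ref{th dimker} to both the limit problem $(L,B)$ and to each problem $(L(k),B(k))$, so that the desired inequalities \eqref{ner ker} and \eqref{ner coker} become purely linear-algebraic statements about the finite number matrices $M(L,B)\in\mathbb{C}^{l\times rm}$ and $M(L(k),B(k))\in\mathbb{C}^{l\times rm}$: I must show that for all sufficiently large~$k$,
\[
\dim\ker M(L(k),B(k))\le\dim\ker M(L,B),\qquad
\dim\operatorname{coker} M(L(k),B(k))\le\dim\operatorname{coker} M(L,B).
\]
Since these matrices all have the same fixed size $l\times rm$, the rank--nullity relations $\dim\ker = rm-\operatorname{rank}$ and $\dim\operatorname{coker} = l-\operatorname{rank}$ show that both inequalities are equivalent to the single assertion $\operatorname{rank} M(L(k),B(k))\ge\operatorname{rank} M(L,B)$ for all large~$k$.

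Next I would bring in Theorem~\ref{koef matr}: the hypothesis \eqref{zb LB11}, i.e. $(L(k),B(k))\xrightarrow{s}(L,B)$, gives $M(L(k),B(k))\to M(L,B)$ entrywise as $k\to\infty$. It remains only to recall the standard fact that the rank function is lower semicontinuous on the space of $l\times rm$ matrices: if some $q\times q$ minor of $M(L,B)$ is nonzero, then by continuity of the determinant the corresponding minor of $M(L(k),B(k))$ is nonzero for all sufficiently large~$k$, whence $\operatorname{rank} M(L(k),B(k))\ge q$. Taking $q=\operatorname{rank} M(L,B)$ and choosing a nonvanishing minor of that size yields $\operatorname{rank} M(L(k),B(k))\ge\operatorname{rank} M(L,B)$ for all large~$k$, which is exactly what is needed. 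Translating back through Theorem~\ref{th dimker} delivers \eqref{ner ker} and \eqref{ner coker}.

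This argument is short and essentially routine once the preceding theorems are in hand; there is no real obstacle in the proof of Theorem~\ref{ker coker} itself. The genuine difficulty lies upstream, in Theorem~\ref{koef matr} (establishing the entrywise convergence of characteristic matrices from mere strong operator convergence, which requires controlling the action of $B(k)$ on the solutions $Y_i(k)$ of the Cauchy problems \eqref{zad kosh1}, \eqref{zad kosh2}) and in Theorem~\ref{th dimker} (identifying the Fredholm numbers of the operator with those of a finite matrix); those are proved separately in Sections~\ref{section6} and~\ref{section7}. I would also add the remark that the semicontinuity is genuinely one-sided: because, as emphasized in the introduction, the Fredholm numbers can jump upward under arbitrarily small finite-dimensional perturbations, the reverse inequalities in \eqref{ner ker}--\eqref{ner coker} — and hence continuity of the Fredholm numbers — fail in general, consistent with the lower semicontinuity of rank being strict in the limit.
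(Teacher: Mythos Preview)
Your proposal is correct and follows essentially the same approach as the paper: invoke Theorem~\ref{koef matr} to get $M(L(k),B(k))\to M(L,B)$, use the nonvanishing of a maximal-order minor of $M(L,B)$ together with continuity of the determinant to conclude $\operatorname{rank}M(L(k),B(k))\ge\operatorname{rank}M(L,B)$ for large $k$, and translate back via Theorem~\ref{th dimker} and the rank--nullity relations. The only cosmetic difference is the order of presentation; the substance is identical.
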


Let us consider three significant direct consequences of Theorem~\ref{ker coker}. Suppose that condition \eqref{zb LB11} is satisfied.

\begin{corollary}\label{cor1}
If the operator $(L,B)$ is invertible, then so are the operators $\left(L(k),B(k)\right)$ for all sufficiently large $k$.
\end{corollary}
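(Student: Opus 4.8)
The plan is to derive Corollary~\ref{cor1} directly from Theorem~\ref{ker coker} combined with Theorem~\ref{th_fredh high}. Since the operators $(L,B)$ and $(L(k),B(k))$ all act between the same pair of Banach spaces $(W^{n+r}_p)^m$ and $(W^n_p)^m\times\mathbb{C}^l$, Theorem~\ref{th_fredh high} tells us they are all Fredholm with the same index $rm-l$; in particular $\mathrm{ind}(L(k),B(k))=\mathrm{ind}(L,B)$ for every $k$.

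First I would observe that invertibility of a Fredholm operator is equivalent to the simultaneous vanishing of both Fredholm numbers: $(L,B)$ is invertible if and only if $\dim\ker(L,B)=0$ and $\dim\operatorname{coker}(L,B)=0$. (Here we use that a Fredholm operator has closed range, so trivial kernel and trivial co-kernel already force bijectivity, and the bounded inverse theorem then gives a continuous inverse.) Thus the hypothesis that $(L,B)$ is invertible means precisely that $\dim\ker(L,B)=\dim\operatorname{coker}(L,B)=0$.

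Next I would apply Theorem~\ref{ker coker}: under the strong-convergence assumption~\eqref{zb LB11}, for all sufficiently large $k$ we have $\dim\ker(L(k),B(k))\le\dim\ker(L,B)=0$ and $\dim\operatorname{coker}(L(k),B(k))\le\dim\operatorname{coker}(L,B)=0$. Since both quantities are nonnegative integers, this forces $\dim\ker(L(k),B(k))=\dim\operatorname{coker}(L(k),B(k))=0$ for all large $k$. Invoking again the Fredholm property from Theorem~\ref{th_fredh high} (closed range, trivial kernel, trivial co-kernel), we conclude that $(L(k),B(k))$ is invertible for all sufficiently large $k$.

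There is essentially no obstacle here — the corollary is a soft consequence once Theorems~\ref{th_fredh high} and~\ref{ker coker} are in hand. The only point worth spelling out carefully is the equivalence "Fredholm $+$ zero Fredholm numbers $\iff$ invertible," which rests on the closedness of the range of a Fredholm operator (already noted in the excerpt, citing \cite[Lemma~19.1.1]{Hermander1985}) together with the open mapping theorem; one could alternatively phrase it as: the index is $rm-l$ for all these operators, and an operator with index $rm-l$ whose kernel is trivial has co-kernel of dimension $l-rm$, which is $0$ exactly when $l=rm$ (the case forced by invertibility of $(L,B)$ via Corollary~\ref{invertible}), so triviality of the kernel alone already suffices.
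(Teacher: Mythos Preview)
Your argument is correct and is precisely the ``direct consequence'' of Theorem~\ref{ker coker} that the paper has in mind (the paper states Corollaries~\ref{cor1}--\ref{cor3} without separate proofs, merely noting they follow from Theorem~\ref{ker coker} under hypothesis~\eqref{zb LB11}). The extra remarks about the index and Corollary~\ref{invertible} are accurate but not needed: once both Fredholm numbers vanish by the inequality in Theorem~\ref{ker coker}, invertibility is immediate.
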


\begin{corollary}\label{cor2}
If the boundary-value problem \eqref{bound_pr_1}, \eqref{bound_pr_2} has a  solution for arbitrarily chosen right-hand sides, then so do the boundary-value problems \eqref{6.syste}, \eqref{6.kue} for all sufficiently large $k$.
\end{corollary}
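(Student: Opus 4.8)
The plan is to reduce the statement to inequality \eqref{ner coker} of Theorem~\ref{ker coker}. First I would restate the hypothesis in operator language: the fact that the boundary-value problem \eqref{bound_pr_1}, \eqref{bound_pr_2} is solvable for every right-hand side $(f,c)\in(W^n_p)^m\times\mathbb{C}^l$ means precisely that the bounded operator \eqref{(L,B)} is surjective, i.e. $(L,B)\bigl((W^{n+r}_p)^m\bigr)=(W^n_p)^m\times\mathbb{C}^l$. Since the co-kernel of $(L,B)$ is by definition the quotient $\bigl((W^n_p)^m\times\mathbb{C}^l\bigr)/(L,B)\bigl((W^{n+r}_p)^m\bigr)$, this surjectivity is equivalent to $\operatorname{dim}\operatorname{coker}(L,B)=0$.

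Next, under the standing assumption that condition \eqref{zb LB11} holds, Theorem~\ref{ker coker} is applicable, and inequality \eqref{ner coker} gives, for all sufficiently large $k$,
\[
\operatorname{dim}\operatorname{coker}(L(k),B(k))\leq\operatorname{dim}\operatorname{coker}(L,B)=0,
\]
whence $\operatorname{dim}\operatorname{coker}(L(k),B(k))=0$ for all such $k$. By Theorem~\ref{th_fredh high} each operator \eqref{L(e)_B(e)} is Fredholm, so its range is a closed subspace of $(W^n_p)^m\times\mathbb{C}^l$; a (closed) subspace of codimension $0$ is the whole space, hence $(L(k),B(k))$ is surjective for all sufficiently large $k$. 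This means exactly that the boundary-value problem \eqref{6.syste}, \eqref{6.kue} has a solution $y(\cdot,k)\in(W^{n+r}_p)^m$ for arbitrarily chosen right-hand sides $f(\cdot,k)$ and $c(k)$, which is the assertion.

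There is essentially no obstacle here: the corollary is the specialization of \eqref{ner coker} to the case of vanishing co-kernel, and the only points worth spelling out are the translation between ``solvable for every right-hand side'' and ``trivial co-kernel'' and the elementary observation that a subspace of codimension $0$ coincides with the ambient space (for which Fredholmness, and hence closedness of the range, is not even strictly needed). Once Theorems~\ref{th_fredh high} and~\ref{ker coker} are available, the proof is a three-line argument.
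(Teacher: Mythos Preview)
Your argument is correct and is exactly the approach the paper intends: Corollary~\ref{cor2} is stated there as a direct consequence of Theorem~\ref{ker coker} under hypothesis~\eqref{zb LB11}, and your translation of ``solvable for every right-hand side'' into $\dim\operatorname{coker}(L,B)=0$ followed by an application of~\eqref{ner coker} is precisely the intended three-line deduction. As you note yourself, the appeal to Fredholmness for closedness of the range is not even needed, since $\dim\operatorname{coker}=0$ already forces the range to equal the target space.
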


\begin{corollary}\label{cor3}
If the homogeneous boundary-value problem \eqref{bound_pr_1}, \eqref{bound_pr_2} has only a trivial solution, then so do the homogeneous problems \eqref{6.syste}, \eqref{6.kue} for all sufficiently large $k$.
\end{corollary}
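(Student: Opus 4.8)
The plan is to reduce Corollary~\ref{cor3} to the kernel inequality~\eqref{ner ker} of Theorem~\ref{ker coker}. The hypothesis that the homogeneous problem \eqref{bound_pr_1}, \eqref{bound_pr_2} --- that is, the system $Ly=0$ subject to $By=0$ --- has only the trivial solution means exactly that $\ker(L,B)=\{0\}$, hence $\dim\ker(L,B)=0$. Symmetrically, for each index $k$ the homogeneous problem \eqref{6.syste}, \eqref{6.kue} has only the trivial solution if and only if $\dim\ker(L(k),B(k))=0$. So the corollary is equivalent to the implication: $\dim\ker(L,B)=0$ forces $\dim\ker(L(k),B(k))=0$ for all sufficiently large $k$.

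First I would observe that the standing assumption~\eqref{zb LB11} is precisely the hypothesis of Theorem~\ref{ker coker}, so that theorem applies and furnishes a $k_0\in\mathbb{N}$ with $\dim\ker(L(k),B(k))\le\dim\ker(L,B)$ for every $k\ge k_0$. Substituting $\dim\ker(L,B)=0$ and using that the dimension of a linear space is a nonnegative integer forces $\dim\ker(L(k),B(k))=0$ for all $k\ge k_0$, which is the assertion. I do not expect any genuine obstacle here: all the work has already been absorbed into Theorem~\ref{ker coker}, whose proof in turn rests on the convergence $M(L(k),B(k))\to M(L,B)$ (Theorem~\ref{koef matr}), the identification of the Fredholm numbers of the problem with those of the characteristic matrix (Theorem~\ref{th dimker}), and the lower semicontinuity of the rank of matrices.

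If instead one prefers a self-contained argument not routed through Theorem~\ref{ker coker}, one can argue directly: by \eqref{dimker} the hypothesis says $\dim\ker M(L,B)=0$, i.e. $M(L,B)\in\mathbb{C}^{l\times rm}$ has full column rank $rm$; by Theorem~\ref{koef matr} the matrices $M(L(k),B(k))$ converge entrywise to $M(L,B)$; since some $rm\times rm$ minor of $M(L,B)$ is nonzero, the corresponding minor of $M(L(k),B(k))$ is nonzero for all large $k$, whence $\operatorname{rank}M(L(k),B(k))=rm$ and therefore $\dim\ker M(L(k),B(k))=0$; applying \eqref{dimker} to the $k$-th problem then yields $\dim\ker(L(k),B(k))=0$, as required.
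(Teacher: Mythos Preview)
Your proposal is correct and matches the paper's approach: Corollary~\ref{cor3} is stated there as a direct consequence of Theorem~\ref{ker coker} under the standing assumption~\eqref{zb LB11}, and your derivation from inequality~\eqref{ner ker} is exactly that. The alternative direct argument you sketch via Theorems~\ref{koef matr} and~\ref{th dimker} simply unwinds the proof of Theorem~\ref{ker coker} in this special case, so it is not a genuinely different route.
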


Note that the conclusion of Theorem \ref{ker coker} and its consequences cease to be valid for arbitrary bounded linear operators between infinite-dimensional Banach spaces (see Appendix).

\section{Examples}\label{section4}

If all coefficients of the differential expression $L$ are constant, then the characteristic matrix of the corresponding boundary-value problem can be explicitly found. In this case, the characteristic matrix is an analytic function of a certain square number matrix and coincides hence with some polynomial of this matrix.

\textit{Example $1$.} Consider a linear one-point boundary-value problem for first order constant-coefficient differential equation
\begin{align}\label{1.6.1t1}
    (Ly)(t):= y'(t)+Ay(t)&=f(t),\quad
t \in(a,b),\\
\label{1.3t1}
By:= \sum _{k=0}^{n-1} \alpha_{k} y^{(k)}(a)&=c.
\end{align}
Here, $A\in\mathbb{C}^{m\times m}$; $f(\cdot)\in(W_{p}^{n})^{m}$; all  $\alpha_{k}\in\mathbb{C}^{l\times m}$; $c\in \mathbb{C}^{l}$, and $y(\cdot)\in (W_{p}^{n+1})^m$. Thus, we have the bounded linear operators \begin{equation*}
B\colon (W_{p}^{n+1})^{m} \rightarrow\mathbb{C}^{l}
\quad\mbox{and}\quad
(L,B)\colon (W^{n+1}_p)^m\rightarrow (W^{n}_p)^m\times\mathbb{C}^l.
\end{equation*}

Let $Y(\cdot)=(y_{i,j})_{i,j=1}^{m}\in (W_p^{n+1})^{m\times m}$ be the unique solution of the linear homogeneous matrix equation with the initial Cauchy condition
\begin{equation*}\label{r31}
  Y'(t)+A Y(t)=O_{m},\quad t\in (a,b), \qquad Y(a)=I_{m},
  \end{equation*}
with $I_m$ denoting the identity $m \times m$ matrix. Hence,
\begin{equation*}
Y(t)= \operatorname{exp}\big(-A(t-a)\big),
\quad\mbox{with}\quad Y(a)=I_{m},
\end{equation*}
and
\begin{equation*}
Y^{(k)}(t)= (-A)^k \operatorname{exp}\big(-A(t-a)\big),
\quad\mbox{with}\quad Y^{(k)}(a) = (-A)^k,
\end{equation*}
whenever $k \in \mathbb{N}$. Recall that
\begin{equation*}
M(L,B)=\left( B \begin{pmatrix}
                                              y_{1,1} \\
                                              \vdots \\
                                              y_{m,1} \\
                                            \end{pmatrix}
,\ldots,
                                    B \begin{pmatrix}
                                              y_{1,m} \\
                                              \vdots \\
                                              y_{m,m} \\
                                            \end{pmatrix}\right) \in \mathbb{C}^{l\times m}.
\end{equation*}
Thus,
$$
M(L,B)=\sum_{k=0}^{n-1}\alpha_{k}(-A)^k
$$
by the definition of $B$.

Theorem \ref{th_fredh high} asserts that
$$
\operatorname{ind} \, (L, B)=\operatorname{ind} \, (M(L, B))= m-l.
$$
Thus, owing to Theorem \ref{th dimker}, we obtain
\begin{equation*}
\operatorname{dim} \operatorname{ker}(L,B)=\operatorname{dim} \operatorname{ker}\left(\sum_{k=0}^{n-1}\alpha_{k}(-A)^k\right)=
m-\operatorname{rank}\left(\sum_{k=0}^{n-1}\alpha_{k}(-A)^k\right)
\end{equation*}
and
\begin{align*}
\operatorname{dim} \operatorname{coker}(L,B)&=-m+l+\operatorname{dim} \operatorname{ker}\left(\sum_{k=0}^{n-1}\alpha_{k}(-A)^k\right) \\
&=l-\operatorname{rank}\left(\sum_{k=0}^{n-1}\alpha_{k}(-A)^k\right).
\end{align*}
It follows from these formulas that the Fredholm numbers of the problem do not depend on the length of the interval $(a, b)$.

\textit{Example $2$.} Let us consider a multipoint boundary-value problem for the differential system \eqref{1.6.1t1} with $A(t) \equiv O_{m}$. The boundary conditions contain derivatives of integer and/or \textit{fractional} orders (in the sense of Caputo \cite{Kilbas2006}) at certain points $t_k\in[a, b]$, $k=0,\ldots,N$. These conditions become
\begin{equation*}
By:=\sum _{k=0}^N \sum _{j=0}^s \alpha_{k,j} \big(^{\textit{C}}\textmd{D} _{a+}^{\beta_{k,j}}y\big)(t_{k})=c.
\end{equation*}
Here, all $\alpha_{k,j} \in \mathbb{C}^{l \times m}$, whereas the nonnegative numbers $\beta_{k,j}$ satisfy
\begin{equation*}
\beta_{k,0}=0 \quad \mbox{whenever} \quad k \in \{1, 2, \ldots, N\}. %\quad \beta_{kj}<s- 1/p,
\end{equation*}

Theorem \ref{th_fredh high} asserts that index of the operator $(L,B)$ equals $m-l$. Let us find its Fredholm numbers. Since $Y(\cdot)=I_{m}$,  the characteristic matrix is of the form
\begin{equation*}
M(L, B)=\sum _{k=0}^N \sum _{j=0}^s \alpha_{k,j} \big(^{\textit{C}}\textmd{D} _{a+}^{\beta_{k,j}}I_{m}\big)=\sum _{k=0}^N \alpha_{k,0}
\end{equation*}
because the derivatives $\big(^{\textit{C}}\textmd{D} _{a+}^{\beta_{k,j}}I_{m}\big)=0$ whenever $\beta_{k,j} >0$. Hence, by Theorem~\ref{th dimker}, we conclude that
\begin{equation*}
\operatorname{dim} \operatorname{ker}(L,B)=\operatorname{dim} \operatorname{ker}\left(\,\sum_{k=0}^{N}\alpha_{k,0}\right)=m -
\operatorname{rank}\left(\,\sum_{k=0}^{N}\alpha_{k,0}\right)
\end{equation*}
and
\begin{align*}
\operatorname{dim} \operatorname{coker}(L,B)&= - m+l + \operatorname{dim} \operatorname{coker}\left(\,\sum_{k=0}^{N}\alpha_{k,0} \right)\\
&=l-\operatorname{rank}\left(\,\sum_{k=0}^{N}\alpha_{k,0} \right).%\label{dimcoker}
\end{align*}

These formulas show that the Fredholm numbers of the problem do not depend on the length of the interval $(a, b)$ and on the choice of the points $\{t_k\}_{k=0}^N \subset [a, b]$ and matrices $\alpha_{k,j}$ with $j \geq 1$.

\textit{Example $3$.} Consider a two-point boundary-value problem for a system of second-order differential equations generated by the expression
\begin{equation*}%\label{1.6.1t1}
    (Ly)(t):= y^{\prime \prime} (t)+Ay^{\prime}(t),\quad
t \in(a,b),
\end{equation*}
and the boundary operator
\begin{equation*}%\label{dvot}
By:=\sum^{n+1} _{k=0} \left(\alpha_{k} y^{(k)}(a)+ \beta_{k}y^{(k)}(b)\right).
\end{equation*}
Here, $A\in\mathbb{C}^{m\times m}$, and all $\alpha_{k},\beta_{k}\in \mathbb{C}^{l \times m}$. Thus, we have the bounded operator
$$
(L,B)\colon (W^{n+2}_p)^m\rightarrow (W^{n}_p)^m\times\mathbb{C}^l
$$
and characteristic matrix $M(L,B) \in \mathbb{C}^{l\times 2m}$.

It is easy to verify that
$$
Y_1(t)\equiv I_m \quad\mbox{and}\quad Y_2(t)=\varphi (A, t),
$$
where the function $\varphi (\lambda, t):=1-\exp(-\lambda(t-a))\lambda^{-1}$ is an entire analytic function of $\lambda \in \mathbb{C}$ for each fixed $t \in [a, b]$. Then
$$
[BY_1]=\sum_{k=0}^{n+1} \left(\alpha_kI_m^{(k)}(a)+\beta_kI_m^{(k)}(b)\right)=(\alpha_0+\beta_0)I_m
$$
and
$$
[BY_2]=\sum_{k=0}^{n+1} \left(\alpha_k\varphi^{(k)}(A, a)+\beta_k\varphi^{(k)}(A, b)\right).
$$
However
$$
Y_2^{(k)}(t)=(-1)^kA^k\exp (-A(t-a))
$$
whenever $k \in \{0, \ldots, n+1\}$. Hence,
$$
[BY_2]=\sum_{k=0}^{n+1} \left(\alpha_kI_m+\beta_k\exp (-A(b-a))\right)(-A)^k.
$$
The characteristic block matrix thus becomes
$$
M(L, B)= \left( \alpha_0+\beta_0; \sum_{k=0}^{n+1} \left(\alpha_k+\beta_k\exp (-A(b-a))\right)(-A)^k\right).
$$

According to Theorem \ref{th dimker}, the dimensions of the kernel and co-kernel of the inhomogeneous boundary-value problem respectively equals the dimensions of the kernel and co-kernel of the matrix $M(L, B)$.
Specifically, if $\beta_k \equiv 0$ and if the problem is one-point, then $$
M(L, B)= \left( \alpha_0; \sum_{k=0}^{n+1} \alpha_k(-A)^k\right).
$$
We see in this case that the Fredholm numbers of the boundary-value problem do not depend on the length of the interval $(a,b)$.

Note that the matrix $\exp (-A(b-a))$ can be found in an explicit form because every entire analytic function of a number matrix $A \in \mathbb{C}^{m \times m}$ is a polynomial of $A$. This polynomial is expressed via the matrix $A$ by the Lagrange--Sylvester Interpolation Formula (see, e.g., \cite{Gan1959}). Its degree is no greater than $m-1$.

\textit{Example $4$.} Consider a two-point boundary-value problem for another system of second-order differential equations
\begin{equation*}%\label{1.6.1t1}
    (Ly)(t):= y^{\prime \prime} (t)+Ay(t),\quad
t \in(a,b),
\end{equation*}
where $A\in\mathbb{C}^{m\times m}$. The boundary conditions induced by the same operator as that in Example 3; namely
\begin{equation*}%\label{dvot}
By=\sum^{n+1} _{k=0} \left(\alpha_{k} y^{(k)}(a)+ \beta_{k}y^{(k)}(b)\right).
\end{equation*}

It is easy to check in this case that, for each fixed $t \in [a, b]$, the fundamental matrix-valued functions $Y_{1}(t)$ and $Y_{2}(t)$ are entire functions of the numerical matrix $A$ given by some convergent power series. Then
\begin{align*}
[BY_1]&=\sum_{\substack{k=1\\k \,\, is \,\, odd}}^{n+1} \beta_k(-1)^kA^{k}\\
&+\sum_{\substack{k=0\\k \,\, is \,\, even}}^{n+1} \alpha_k(-1)^k(\sqrt{A})^{2k-1}\sin \big(\sqrt{A}(b-a)\big)\\
&+\sum_{\substack{k=1\\k \,\, is \,\, odd}} \beta_k(-1)^kA^{k}\cos \big(\sqrt{A}(b-a)\big)
\end{align*}
and
\begin{align*}
[BY_2]&=\sum_{\substack{k=1\\k \,\, is \,\, even}}^{n+1} \alpha_k(-1)^kA^{k}\\
&+\sum_{\substack{k=0\\k \,\, is \,\, even}}^{n+1} \alpha_k(-1)^kA^{k}\cos \big(\sqrt{A}(b-a)\big)\\
&+\sum_{\substack{k=1\\k \,\, is \,\, odd}}^{n+1} \beta_k(-1)^k(\sqrt{A})^{2k-1}\sin \big( \sqrt{A}(b-a)\big),
\end{align*}
with the block characteristic matrix $M(L,B)=[BY_1; BY_2]$.

Specifically, if $\beta_k \equiv 0$ (the case of  one-point boundary-value problem), then
\begin{gather*}
M(L,B) = \left[\,\sum_{\substack{k=0\\k \,\, is \,\, even}}^{n+1} \alpha_k(-1)^k(\sqrt{A})^{2k-1}\sin \big(\sqrt{A}(b-a)\big); \right. \\
\left.\sum_{\substack{k=1\\k \,\, is \,\, even}}^{n+1} \alpha_k(-1)^kA^{k}+
\sum_{\substack{k=0\\k \,\, is \,\, even}}^{n+1}
\alpha_k(-1)^kA^{k}\cos \big(\sqrt{A}(b-a)\big)  \right].
\end{gather*}
Unlike Example 3, this matrix depends in general on the length of the interval $(a,b)$.

If $\alpha_k \equiv 0$, $k$ is even, $\beta_k \equiv 0$, and $k$ is odd, then the characteristic matrix $M(L,B)=O_{2m \times l}$. Therefore, its Fredholm numbers take the largest possible values.

As in Example 3, the matrices $\sin \big(\sqrt{A}(b-a)\big)$ and $\cos \big( \sqrt{A}(b-a)\big)$ can be exactly found as Lagrange--Sylvester interpolation polynomials.

\textit{Example 5.} Consider the following linear boundary-value problem for a system of $m$ first-order differential equations:
\begin{equation}\label{1.6.1t}
    Ly(t):= y'(t)=f(t),\quad
t \in(a,b), \qquad By= c,
\end{equation}
where $f(\cdot) \in (W_{p}^{n})^{m}$ and $c \in \mathbb{C}^{l}$ and $B$
is an arbitrary linear continuous operator from
$(W_{p}^{n+1})^{m}$ to $\mathbb{C}^{l}$. We suppose that $1\leq p<\infty$.

Note that $Y(\cdot)=I_{m}$ is the unique solution of the linear homogeneous matrix equation of the form \eqref{1.6.1t} with the initial Cauchy condition
\begin{equation*}\label{r3}
   Y'(t)=0,\quad t\in (a,b), \qquad Y(a)=I_{m}.
  \end{equation*}
According to \eqref{st anal}, we have
\begin{equation*}
M(L,B)=[BY]=\sum _{i=0}^{n} \alpha_{i} Y^{(i)}(a)+
\int_{a}^b \Phi(t)Y^{(n+1)}(t){\rm d}t=\alpha_{0}.
\end{equation*}
Therefore,
\begin{equation*}
\operatorname{dim} \operatorname{ker}(M(L,B))=\operatorname{dim} \operatorname{ker}(\alpha_{0})
\end{equation*}
and
\begin{equation*}
\operatorname{dim} \operatorname{coker}(M(L,B))=\operatorname{dim} \operatorname{coker}(\alpha_{0}).
\end{equation*}
Hence, the boundary-value problem \eqref{1.6.1t} is well posed if and only if the number matrix $\alpha_{0}$ is square and nonsingular.

\section{Proof of the index theorem}\label{section5}

We previously establish an auxiliary result concerning the Cauchy problem for the differential system~\eqref{bound_pr_1}. We introduce the linear operator
\begin{equation}\label{operC}
C\colon (W^{n+r}_p)^m\rightarrow\mathbb{C}^{rm}
\end{equation}
by putting
\begin{equation*}%\label{operCy}
Cy := \operatorname{col}\left(y(a),y'(a),\ldots,y^{(r-1)}(a)\right) \quad\mbox{for any} \quad y\in (W^{n+r}_p)^m.
\end{equation*}
It follows from the continuous embedding $W^{n+r}_p[a,b]\subset C^{r-1}[a,b]$ that the operator $C$ is well defined and bounded.

\begin{lemma}\label{ob Koshi}
The linear bounded operator $(L,C)$ is invertible on the a pair of Banach spaces
\begin{equation}\label{operCL}
(L,C)\colon (W^{n+r}_p)^m\rightarrow(W^{n}_p)^m\times\mathbb{C}^{rm}.
\end{equation}
\end{lemma}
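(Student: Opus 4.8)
The plan is to show that $(L,C)$ is a bounded bijection and then invoke the open mapping theorem (or, since we already know from the general theory that the operator is Fredholm of index $rm - rm = 0$, it suffices to prove injectivity). Concretely, I would argue that for every pair $(f,c) \in (W^n_p)^m \times \mathbb{C}^{rm}$ there is exactly one $y \in (W^{n+r}_p)^m$ with $Ly = f$ and $Cy = c$, i.e. the Cauchy problem
\begin{equation*}
y^{(r)}(t) + \sum_{j=1}^{r} A_{r-j}(t)\, y^{(r-j)}(t) = f(t), \qquad \operatorname{col}\bigl(y(a),\dots,y^{(r-1)}(a)\bigr) = c,
\end{equation*}
is uniquely solvable with solution lying in the Sobolev class $(W^{n+r}_p)^m$.

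The first step is existence and uniqueness at the level of $W^r_p$ (the case $n=0$). Rewrite the $r$-th order $m$-dimensional system as a first order $rm$-dimensional system $z' = \mathcal{A}(t) z + g(t)$ with $z = \operatorname{col}(y,y',\dots,y^{(r-1)})$, where $\mathcal{A}(\cdot)$ is the companion-type matrix built from the $A_{r-j}(\cdot)$ and $g = \operatorname{col}(0,\dots,0,f)$. Since the coefficients $A_{r-j}(\cdot) \in (W^n_p)^{m\times m} \subset L_\infty$ and $f \in (W^n_p)^m \subset L_p$, the classical Carathéodory existence–uniqueness theorem for linear systems (Picard iteration / Gronwall) gives a unique absolutely continuous solution $z$ on $[a,b]$ with prescribed value $z(a) = c$; then $z' \in L_p$, so $y \in W^r_p$ as required, and $(L,C)$ is a bijection from $(W^r_p)^m$ onto $(W^0_p)^m \times \mathbb{C}^{rm}$. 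Boundedness of $(L,C)$ is already noted, so the bounded inverse theorem finishes the $n=0$ case.

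The second step is the bootstrap in $n$: I would show by induction on $n$ that if $f \in (W^n_p)^m$ and $A_{r-j} \in (W^n_p)^{m\times m}$, then the (already unique) solution $y$ actually lies in $(W^{n+r}_p)^m$. From $y^{(r)} = f - \sum_j A_{r-j} y^{(r-j)}$ and the fact that $W^n_p$ is a Banach algebra (stated in the excerpt), the right-hand side belongs to $W^n_p$ once $y \in W^{n+r-1}_p$; differentiating the identity and using the product rule repeatedly, one climbs from $W^r_p$ regularity up to $W^{n+r}_p$ regularity, with a norm estimate $\|y\|_{n+r,p} \le c\,(\|f\|_{n,p} + |c|)$ at each stage. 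This makes $(L,C)$ a continuous bijection $(W^{n+r}_p)^m \to (W^n_p)^m \times \mathbb{C}^{rm}$ for every $n$, hence a topological isomorphism by the open mapping theorem.

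The main obstacle is not any single hard estimate but the careful handling of the $p = \infty$ endpoint and the regularity bootstrap: for $p=\infty$ the coefficients and data are only in $W^n_\infty$, and one must make sure the Carathéodory/Gronwall machinery and the product-rule regularity ladder go through verbatim (they do, since $W^n_\infty$ is still a Banach algebra embedded in $C^{n}$ up to the derivative count we need, and $L_\infty \subset L_1$ on a finite interval). A secondary point to be careful about is that $Y_i \in (W_p^{n+r})^{m\times m}$ for the matrix Cauchy problems \eqref{zad kosh1}, \eqref{zad kosh2} is an immediate corollary of this lemma applied column-by-column with $f \equiv 0$, which is the use to which the lemma is put later.
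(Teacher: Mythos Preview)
Your approach is correct and essentially the same as the paper's: both prove bijectivity of the Cauchy map by combining classical existence--uniqueness for linear first-order systems (via the companion reduction) with a regularity bootstrap that exploits the Banach-algebra property of $W^n_p$, then invoke the bounded inverse theorem. The paper organizes it slightly differently---it first runs the induction on $n$ for $r=1$ using the explicit variation-of-constants formula, and only afterwards reduces $r\ge 2$ to the first-order case---whereas you reduce to first order immediately and then bootstrap directly on the $r$-th order equation; the content is the same.

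Two small corrections. First, your parenthetical shortcut ``since we already know \ldots\ the operator is Fredholm of index $rm-rm=0$, it suffices to prove injectivity'' is circular here: in this paper Theorem~\ref{th_fredh high} (the index result) is \emph{proved using} Lemma~\ref{ob Koshi}, so you cannot invoke it. You do not actually rely on this, since you go on to establish surjectivity directly, but the remark should be deleted. Second, in the $n=0$ step you write $A_{r-j}\in (W^n_p)^{m\times m}\subset L_\infty$; this fails for $n=0$, $p<\infty$, where the coefficients are merely in $L_p$. Fortunately Carath\'eodory theory for linear systems only needs $L_1$ coefficients on a finite interval, and once the solution $z$ is continuous the identity $z'=\mathcal{A}z+g$ with $\mathcal{A}\in L_p$, $z\in L_\infty$, $g\in L_p$ still gives $z'\in L_p$, so the conclusion $y\in (W^r_p)^m$ stands.
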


\begin{proof} We first treat the $r=1$ case by the mathematical induction in $n\in\mathbb{N}\cup \{0\}$. Thus, we consider the Cauchy problem
\begin{align}\label{zad Koshi}
    Ly(t):= y'(t)+A(t)y(t)&=f(t),\quad
t \in(a,b),\\
\label{zad Koshi2}
    y(a)&= c.
\end{align}
Here, $A(\cdot) \in (W_p^n)^{m\times m}$, $f(\cdot) \in (W^n_p)^m$, $c\in \mathbb{C}^{m}$, and $y(\cdot)\in (W_{p}^{n+1})^m$, with $(L,C)$ being a bounded linear operator between the spaces
\begin{equation}\label{L,B1}
  (L,C)\colon(W_{p}^{n+1})^{m} \rightarrow(W^{n}_p)^m\times\mathbb{C}^{rm}.
\end{equation}
Each solution to the problem \eqref{zad Koshi}, \eqref{zad Koshi2} becomes
\begin{equation}\label{rozv}
y(t)=Y(t)c+Y(t)\int_{a}^t Y^{-1}(s)f(s){\rm d}s.
\end{equation}

Let $n=0$. Since the homogeneous Cauchy problem has a unique solution $y=0$, the operator $(L,C)$ is one-to-one. It follows from the assumption $A(\cdot) \in (L_p)^{m\times m}$ that $Y(\cdot) \in (W^1_p)^{m\times m}$. Since $W^1_p$ is a Banach algebra, the inverse $Y^{-1}(\cdot)$ also belongs to $(W^1_p)^{m\times m}$, which implies that the right-hand side of equality \eqref{rozv} belongs to $(W^1_p)^m$. Thus, the Cauchy problem \eqref{zad Koshi}, \eqref{zad Koshi2} has a solution $y(\cdot)$ of class
$(W^1_p)^m$ whatever $f(\cdot) \in (W^n_p)^m$ and $c\in \mathbb{C}^{m}$; i.e., the operator \eqref{L,B1} is onto for $n=0$.

Assume now that the conclusion of the lemma is true for a certain number $n=k\in \mathbb{N} \cup \{0\}$. Let us prove that the conclusion holds true for $n=k+1$. We use the same reasoning to show that the operator \eqref{L,B1} is one-to-one. It remains to show that this operator is onto. By the inductive assumption, the matrix-valued function $Y(\cdot)$ belong to $(W^{k+1}_p)^{m\times m}$ as a solution to the matrix Cauchy problem
\begin{equation*}
Y'(t)+A(t)Y(t)=O_{m},\quad t\in(a,b),\qquad Y(a)=I_{m}.
\end{equation*}
Hence, $Y'=-AY\in(W^{k+1}_p)^{m\times m}$ because $W^{k+1}_p$ is a Banach algebra. This implies that $Y(\cdot)\in(W^{k+2}_p)^{m\times m}$. Hence, $Y^{-1}(\cdot)\in(W^{k+2}_p)^{m\times m}$ because $W^{k+2}_p$ is a Banach algebra too. Thus, the Cauchy problem \eqref{zad Koshi}, \eqref{zad Koshi2} has a solution $y(\cdot)\in(W^{k+2}_p)^m$ of the form \eqref{rozv} whatever $f(\cdot) \in (W^n_p)^m$ and $c\in \mathbb{C}^{m}$. Therefore, the operator \eqref{L,B1} is onto  for $n=k+1$. We then conclude that this continuous bijection operator is an isomorphism by the bounded inverse theorem.

Let us now prove the lemma in the $r\geq2$ case. Consider an  inhomogeneous Cauchy problem
\begin{gather}
%(L,C)y=(f,c)\in \left(W^{n+r}_p\right)^m\rightarrow\mathbb{C}^{m}.
Ly(t)=f(t), \quad t\in(a,b), \label{neodnKoshi} \\
y^{(j-1)}(a)=c_j, \quad j\in \{1,\dots, r\}. \label{neodnKoshi2}
\end{gather}
Here, the vectors $c_j \in \mathbb{C}^{m}$ are arbitrarily given. The problem \eqref{neodnKoshi}, \eqref{neodnKoshi2} is reduced to the following Cauchy problem for a system of first-order differential equations:
\begin{equation}\label{zad Ko 1}
x'(t)+K(t)x(t)=g(t), \quad t\in(a,b),\qquad
x(a)=c.
\end{equation}
Here,
\begin{gather*}
x(\cdot):=\mathrm{col}\big(y(\cdot), y'(\cdot), \dots, y^{(r-1)}(\cdot)\big)\in (W^{n+r}_p)^{rm},\\
%\begin{equation}\label{colon gf}
g(\cdot):=\mathrm{col}\big(\underbrace{0, \dots, 0}_{(r-1)m}, f(\cdot)\big)\in (W^{n}_p)^{rm},\\
c:=\mathrm{col}\big(c_1, \dots, c_r\big)\in \mathbb{C}^{rm},
%\end{equation}
\end{gather*}
and the block matrix-valued function $K(\cdot) \in (W^{n}_p)^{rm\times rm}$ is defined by the formula
\begin{equation}\label{AA-reduced}
K(\cdot):=\left(
\begin{array}{ccccc}
O_m & -I_m & O_m & \ldots & O_m \\
O_m & O_m & -I_m & \ldots & O_m \\
\vdots & \vdots & \vdots & \ddots & \vdots \\
O_m & O_m & O_m & \ldots & -I_m \\
A_0(\cdot) & A_1(\cdot) & A_2(\cdot) & \ldots & A_{r-1}(\cdot)\\
\end{array}\right).
\end{equation}
We have just proved that the Cauchy problem \eqref{zad Ko 1} has a unique solution $x(\cdot)\in (W^{n+r}_p)^{rm}$ for arbitrary $g\in (W^{n}_p)^{rm}$ and $c\in C^{rm}$. Hence, the linear bounded operator \eqref{operCL} is bijective. Therefore, it is an isomorphism by the bounded inverse theorem.
\end{proof}

\begin{proof}[Proof of Theorem \ref{th_fredh high}.] We separately treat three cases: $l=rm$, $l>rm$, and $l<rm$.

The $l=rm$ case. According to Lemma~\ref{ob Koshi}, we have the isomorphism
\begin{equation}\label{isom(L,C)}
(L,C)\colon (W^{n+r}_p)^m\leftrightarrow(W^{n}_p)^m\times\mathbb{C}^{rm}.
\end{equation}
Note that
\begin{equation*}%\label{(L,B)_expand}
(L,B)=(L,C)+(0,B-C)
\end{equation*}
and that $(0,B-C_{l,m})$ is a finite-dimensional operator.  Hence (see, e.g., \cite[Chapter~IV, Section~5, Subsection~2]{Kato_book}), the operator $(L,B)$ is Fredholm between the spaces \eqref{isom(L,C)}, and its index coincides with the zero index of the operator \eqref{isom(L,C)}, which is what was to be proved in this case.

The $l>rm$ case. Put
\begin{equation*}
C_{l-rm}\,y:=\operatorname{col}\bigl(Cy,\underbrace{0,\dots ,0}_{l-rm}\bigr)
\quad\mbox{for every}\quad y(\cdot)\in (W_{p}^{n+r})^m.
\end{equation*}
It follows directly from Lemma~\ref{ob Koshi} that the bounded operator
\begin{equation*}
(L,C_{l-mr}):(W^{n+r}_p)^m\to(W^{n}_p)^m\times\mathbb{C}^{l}
\end{equation*}
has zero kernel and the closed domain
\begin{equation*}
(W^{n}_p)^m\times\mathbb{C}^{rm}\times\{0\}^{l-rm}.
\end{equation*}
Hence, this operator is Fredholm with index $rm-l$; then so is the operator \eqref{operCL} because
\begin{equation*}%\label{(L,B)_expand}
(L,B)=(L,C_{l-mr})+(0,B-C_{l-mr})
\end{equation*}
and since $(0,B-C_{l-mr})$ is a finite-dimensional operator.

The $l<rm$ case. We introduce the bounded linear operator \begin{equation*}
P_{rm-l}:\mathbb{C}^{rm}\to\mathbb{C}^{l}
\end{equation*}
by the formula
\begin{equation*}
P_{rm-l}\operatorname{col}(c_{1},\ldots,c_{l},\ldots,c_{rm})=
\operatorname{col}(c_{1},\ldots,c_{l})
\end{equation*}
for every $c_{1},\ldots,c_{rm}\in\mathbb{C}$. It follows directly from Lemma~\ref{ob Koshi} that the bounded operator
\begin{equation}\label{LP-trankated}
(L,P_{rm-l}C):(W^{n+r}_p)^m\to(W^{n}_p)^m\times\mathbb{C}^{l}
\end{equation}
is onto and that the kernel of this operator equals $(L,C)^{-1}(\mathcal{O}\times\mathbb{C}^{rm-l})$
and therefore is of dimension $rm-l$; here, $\mathcal{O}$ stands for the null subspace of $(W^{n}_p)^m\times\mathbb{C}^{l}$. Hence, the operator \eqref{LP-trankated} is  Fredholm with index $rm-l$; then so is the operator \eqref{operCL} because
\begin{equation*}%\label{(L,B)_expand}
(L,B)=(L,P_{rm-l}C)+(0,B-P_{rm-l}C)
\end{equation*}
and since $(0,B-P_{rm-l}C)$ is a finite-dimensional operator.
\end{proof}

\section{Proof of the Fredholm numbers theorem}\label{section6}

\begin{proof}[Proof of Theorem \ref{th dimker}.] According to Lemma~\ref{operCL}, the restriction of the mapping \eqref{operC} to the kernel of $L$ sets an isomorphism
\begin{equation}\label{L-isom}
C:\ker L\leftrightarrow \mathbb{C}^{rm}.
\end{equation}
The inverse of \eqref{L-isom} is defined for every vector
\begin{equation*}
q:=\operatorname{col}({q_{1},\ldots,q_{r}})\in\mathbb{C}^{rm},
\quad\mbox{with}\quad q_{1},\ldots,q_{r}\in\mathbb{C}^{m},
\end{equation*}
as follows:
\begin{equation*}
q\mapsto y(\cdot):=\sum_{i=1}^{r}Y_i(\cdot)q_{i}.
\end{equation*}
Recall that $Y_i(\cdot)$ is a unique solution to the matrix Cauchy problem \eqref{zad kosh1}, \eqref{zad kosh2}. Then
\begin{align*}
By&=B\left(\,\sum_{i=1}^{r}Y_i(\cdot)q_{i}\right)=
\sum_{i=1}^{r}B(Y_i(\cdot)q_{i})=\sum_{i=1}^{r}[BY_i]q_{i}\\
&=M(L,B)q.
\end{align*}
The third equality is due to Lemma~\ref{dija oper priam} given at the end of this section. Hence,
\begin{equation*}
C\bigl(\ker(L,B)\bigr)=\ker M(L,B),
\end{equation*}
which yields the first required formula \eqref{dimker}.

The second formula \eqref{dimcoker1} follows from \eqref{dimker} and Theorem~\ref{th_fredh high}, namely:
\begin{align*}
\operatorname{dim}\operatorname{coker}(L,B)&=
\operatorname{dim}\operatorname{ker}(L,B)-
\operatorname{dim}\operatorname{ind}(L,B)\\
&=\operatorname{dim}\operatorname{ker}M(L,B)-(rm-l)\\
&=\operatorname{dim}\operatorname{coker}M(L,B).
\end{align*}
As to the last equality, recall that the dimension of the index of any number $l\times rm$ matrix is $rm-l$.
\end{proof}

\begin{lemma}\label{dija oper priam}
Let $E$ be a linear space. The equality
\begin{equation*}\label{rivn_matruz}
\mathcal{B}(\Upsilon q) = [\mathcal{B}\Upsilon]q
\end{equation*}
holds true for an arbitrary linear operator $\mathcal{B}\colon E^{m}\to\mathbb{C}^l$, matrix $\Upsilon\in E^{m\times\mu}$, and vector $q\in\mathbb{C}^\mu$, with $m,l,\mu\in\mathbb{N}$.
\end{lemma}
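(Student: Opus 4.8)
The statement is essentially a bookkeeping identity about how a linear operator with vector-valued input interacts with matrix multiplication on the right. I would prove it by expanding both sides into coordinates and using linearity of $\mathcal{B}$. Write $\Upsilon = (\upsilon_{i,k})$ with $i \in \{1,\dots,m\}$, $k \in \{1,\dots,\mu\}$, and let $\upsilon^{(k)} := \operatorname{col}(\upsilon_{1,k},\dots,\upsilon_{m,k}) \in E^m$ denote the $k$-th column of $\Upsilon$. By definition, $[\mathcal{B}\Upsilon]$ is the $l \times \mu$ matrix whose $k$-th column is $\mathcal{B}\upsilon^{(k)} \in \mathbb{C}^l$, so $[\mathcal{B}\Upsilon]q = \sum_{k=1}^{\mu} q_k\, \mathcal{B}\upsilon^{(k)}$, where $q = \operatorname{col}(q_1,\dots,q_\mu)$.

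On the other side, $\Upsilon q$ is the element of $E^m$ whose $i$-th component is $\sum_{k=1}^{\mu} \upsilon_{i,k} q_k$; that is, $\Upsilon q = \sum_{k=1}^{\mu} q_k \upsilon^{(k)}$, where each $q_k \in \mathbb{C}$ scales the element $\upsilon^{(k)} \in E^m$ componentwise (this makes sense as $E$ is a linear space, hence so is $E^m$). Applying $\mathcal{B}$ and using its linearity over $\mathbb{C}$ gives $\mathcal{B}(\Upsilon q) = \mathcal{B}\bigl(\sum_{k=1}^{\mu} q_k \upsilon^{(k)}\bigr) = \sum_{k=1}^{\mu} q_k\, \mathcal{B}\upsilon^{(k)}$. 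Comparing the two displays yields $\mathcal{B}(\Upsilon q) = [\mathcal{B}\Upsilon]q$, which is the claim.

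**Main obstacle.** There is no real obstacle: the proof is a routine unwinding of definitions plus one application of linearity. The only point deserving a word of care is the interpretation of the two products — $\Upsilon q$ as a matrix-times-vector product producing an element of $E^m$ (entries being finite $\mathbb{C}$-linear combinations of elements of $E$), and $[\mathcal{B}\Upsilon]$ as the matrix obtained by applying $\mathcal{B}$ columnwise — but once these are written out explicitly the identity is immediate. I would therefore keep the proof to a few lines, emphasizing that linearity of $\mathcal{B}$ is what allows $\mathcal{B}$ to be pulled through the finite sum $\sum_k q_k \upsilon^{(k)}$.
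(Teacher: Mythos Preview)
Your proof is correct and follows essentially the same approach as the paper: both arguments write $\Upsilon q=\sum_{k=1}^{\mu}q_k\upsilon^{(k)}$ as a $\mathbb{C}$-linear combination of the columns of $\Upsilon$ and then invoke linearity of $\mathcal{B}$ to match $\sum_k q_k\,\mathcal{B}\upsilon^{(k)}=[\mathcal{B}\Upsilon]q$. The paper additionally tracks the individual entries $c_{j,k}$ of $[\mathcal{B}\Upsilon]$, but this is only a notational difference.
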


Here, as similar to the $E=W^{n+r}_{p}$ case, $[\mathcal{B}\Upsilon]$ stands for  the number $l\times\mu$ matrix whose $j$-th column is the action of $\mathcal{B}$ on the $j$-th column of~$\Upsilon$.

\begin{proof} It is similar to that given in \cite[Lemma~6]{AtlMikh2018} in the case where $m=\mu=l$ and when $E$ is a Sobolev space. We will give the proof for the reader's convenience. Letting $i\in \{1,2,\ldots,m\}$ and $k\in \{1,2,\ldots,\mu\}$ and $j\in \{1,2,\ldots, l\}$, we write the matrix-valued function $\Upsilon$ and column vector $q$ in the form $\Upsilon=(\upsilon_{i,k}(\cdot))$ and $q=\mathrm{col}(q_{k})$. Put $\mathrm{col}(\alpha_{j}):=[\mathcal{B}\Upsilon]q$ and $(c_{j,k}):=[\mathcal{B}\Upsilon]$. Since
$$
\alpha_{j}=\sum_{k=1}^{\mu} c_{j,k}\,q_{k},
$$
we obtain
\begin{align*}
\mathcal{B}(\Upsilon q)&=
\mathcal{B}\,\mathrm{col}\left
(\,\sum_{k=1}^{\mu}\upsilon_{i,k}(\cdot)q_{k}\right)_{i=1}^{m}=
\sum_{k=1}^{\mu}q_{k}
\mathcal{B}\,\mathrm{col}(\upsilon_{i,k}(\cdot))_{i=1}^{m}\\
&=\sum _{k=1}^\mu q_{k}\,\mathrm{col}\left(c_{j,k}\right)_{j=1}^{l}=
\mathrm{col}\left(\sum_{k=1}^\mu q_{k}\,c_{j,k} \right)_{j=1}^{l}=
\mathrm{col}(\alpha_{j})_{j=1}^{l}\\
&=[\mathcal{B}\Upsilon]q,
\end{align*}
as was to be proved.
\end{proof}

\section{Proof of the limit theorems}\label{section7}

To prove Theorem~\ref{koef matr}, we need some auxiliary results.

\begin{lemma}\label{ekviv sul}
The following three conditions are equivalent to each other as $k \rightarrow \infty$:
\begin{itemize}
\item [(I)] $L(k) \rightarrow L$ in the strong operator topology;
\item [(II)] $L(k) \rightarrow L$ in the uniform operator topology;
\item [(III)] $A_{r-j}(\cdot,k) \rightarrow A_{r-j}(\cdot)$ in the Banach space $(W^{n}_p)^{m\times m}$ for each $j\in\{1,\ldots, r\}$.
\end{itemize}
\end{lemma}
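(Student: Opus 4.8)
The plan is to establish the cyclic chain of implications $(II)\Rightarrow(I)\Rightarrow(III)\Rightarrow(II)$, which proves all three conditions equivalent. The implication $(II)\Rightarrow(I)$ is immediate, since convergence in the uniform operator topology trivially implies convergence in the strong operator topology. The core of the argument therefore lies in the two remaining implications, where the precise structure of the operator $L$ as a differential expression with coefficients in the Banach algebra $W_p^n$ must be exploited.

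For $(I)\Rightarrow(III)$, I would test the strong convergence $L(k)y\to Ly$ on a well-chosen family of elements $y\in(W_p^{n+r})^m$. Recall that $(L(k)y-Ly)(t)=\sum_{j=1}^r\bigl(A_{r-j}(\cdot,k)-A_{r-j}(\cdot)\bigr)y^{(r-j)}(t)$. The natural idea is to pick, for each fixed $i\in\{0,\dots,r-1\}$, a vector-valued function $y$ whose derivatives at the relevant orders isolate a single coefficient $A_i$; for instance, taking $y(t)=\frac{(t-a)^{i}}{i!}\,e_s$ for a standard basis vector $e_s\in\mathbb{C}^m$ makes $y^{(i)}\equiv e_s$ and $y^{(j)}\equiv 0$ for $j\ne i$, $j\le r-1$ (note $y^{(r)}\equiv0$ as well since $i\le r-1<r$), so that $L(k)y-Ly=\bigl(A_i(\cdot,k)-A_i(\cdot)\bigr)e_s$. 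Strong convergence then gives $\bigl(A_i(\cdot,k)-A_i(\cdot)\bigr)e_s\to0$ in $(W_p^n)^m$ for every $s$, which is exactly convergence of all columns of $A_i(\cdot,k)$ to those of $A_i(\cdot)$ in $(W_p^n)^m$, i.e. $A_i(\cdot,k)\to A_i(\cdot)$ in $(W_p^n)^{m\times m}$. Running over $i=0,\dots,r-1$ (equivalently $r-j$ for $j=1,\dots,r$) yields $(III)$.

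For $(III)\Rightarrow(II)$, I would estimate the operator norm of $L(k)-L$ directly. For any $y\in(W_p^{n+r})^m$ with $\|y\|_{n+r,p}\le1$ we have
\begin{equation*}
\bigl\|(L(k)-L)y\bigr\|_{n,p}\le\sum_{j=1}^r\bigl\|\bigl(A_{r-j}(\cdot,k)-A_{r-j}(\cdot)\bigr)y^{(r-j)}\bigr\|_{n,p}.
\end{equation*}
Since $W_p^n$ is a Banach algebra, each summand is bounded by $C\,\|A_{r-j}(\cdot,k)-A_{r-j}(\cdot)\|_{n,p}\,\|y^{(r-j)}\|_{n,p}$ for an absolute constant $C$ depending only on the algebra norm, and $\|y^{(r-j)}\|_{n,p}\le\|y\|_{n+r,p}\le1$ because $y^{(r-j)}$ together with its derivatives up to order $n$ involves only derivatives of $y$ up to order $n+r-j\le n+r$. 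Hence $\|L(k)-L\|\le C\sum_{j=1}^r\|A_{r-j}(\cdot,k)-A_{r-j}(\cdot)\|_{n,p}\to0$ by $(III)$, which is $(II)$.

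The main obstacle, such as it is, is bookkeeping rather than conceptual: one must be careful that the chosen test functions in $(I)\Rightarrow(III)$ actually lie in the Sobolev space $(W_p^{n+r})^m$ (polynomials do, trivially) and that they genuinely isolate one coefficient matrix at a time, and one must correctly invoke the Banach algebra property of $W_p^n$ (with its attendant constant) in $(III)\Rightarrow(II)$ to handle the products $A_{r-j}(\cdot,k)\,y^{(r-j)}$ at the level of the $W_p^n$-norm rather than merely the $L_p$-norm. No compactness or Fredholm machinery is needed here; the statement is an equivalence among three topologies on the finite family of first-order-in-structure operators determined by finitely many matrix coefficients.
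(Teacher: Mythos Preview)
Your overall strategy---the cyclic chain $(II)\Rightarrow(I)\Rightarrow(III)\Rightarrow(II)$---matches the paper's, but two steps contain genuine errors.

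In $(I)\Rightarrow(III)$, the test function $y(t)=\frac{(t-a)^{i}}{i!}\,e_s$ does \emph{not} isolate a single coefficient: for $j<i$ one has $y^{(j)}(t)=\frac{(t-a)^{i-j}}{(i-j)!}\,e_s\not\equiv0$, so $L(k)y-Ly$ is a combination of $A_0,\dots,A_i$, not just $A_i$. Your claim ``$y^{(j)}\equiv0$ for $j\neq i$, $j\le r-1$'' is false on the lower side. The remedy is to argue inductively in $i$: from $i=0$ one gets $A_0(\cdot,k)\to A_0(\cdot)$; then the test with $i=1$ gives a relation from which the already-known convergence of $A_0$ lets you extract $A_1$; and so on up to $i=r-1$. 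This is exactly what the paper does (with $Y(t)=t^{i}I_m$).

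In $(III)\Rightarrow(II)$, you invoke ``$W_p^n$ is a Banach algebra'' uniformly, but for $n=0$ the space $W_p^0=L_p$ is \emph{not} a Banach algebra (except $p=\infty$), so the product estimate $\|A\cdot y^{(r-j)}\|_{0,p}\le C\|A\|_{0,p}\|y^{(r-j)}\|_{0,p}$ fails. The paper handles $n=0$ separately via the continuous embedding $W_p^{r}\hookrightarrow L_\infty$: since $y^{(r-j)}\in W_p^{j}\hookrightarrow L_\infty$ for $j\ge1$, one has $\|A\cdot y^{(r-j)}\|_{0,p}\le\|A\|_{0,p}\|y^{(r-j)}\|_{\infty}\le c'_{r,p}\|A\|_{0,p}\|y\|_{r,p}$. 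You need to insert this case distinction; the remaining $n\ge1$ part of your argument is fine.
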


Here, of course, $L$ and $L(k)$ are considered as bounded operators from  $(W^{n+r}_p)^{m}$ to $(W^{n}_p)^{m}$.

\begin{proof} It suffices to show that the following implications hold:
$$
(\mathrm{III})\Longrightarrow (\mathrm{II}) \Longrightarrow (\mathrm{I}) \Longrightarrow (\mathrm{III}).
$$

Let us prove that $(\mathrm{III})\Rightarrow(\mathrm{II})$. Assume  $(\mathrm{III})$ to be valid. We first consider the $n\in\mathbb{N}$ case. Given $y\in (W^{n+r}_p)^{m}$, we have
\begin{align*}
\bigl\|(L(k)-L)y\bigr\|_{n,p}&\leq
\sum_{j=1}^r\bigl\|(A_{r-j}(\cdot,k)- A_{r-j}(\cdot))y\bigr\|_{n,p} \leq \\
&\leq c_{n,p}\sum_{j=1}^r\bigl\|(A_{r-j}(\cdot,k)- A_{r-j}(\cdot))\bigr\|_{n,p}\,\|y\|_{n,p}\\
&\leq c_{n,p}\sum_{j=1}^r\bigl\|(A_{r-j}(\cdot,k)- A_{r-j}(\cdot))\bigr\|_{n,p}\,\|y\|_{n+r,p}
\end{align*}
because $W^{n}_p$ is a Banach algebra; here, the positive number $c_{n,p}$ does not depend on $y$. Hence, the norm of the operator $L(k)-L$ satisfies
\begin{equation*}
\|L(k)-L\|\leq c_{n,p}\sum\limits_{j=1}^r \left\|A_{r-j}(\cdot,k)- A_{r-j}(\cdot)\right\|_{n,p}\to0.
\end{equation*}
If $n=0$, then for every $y\in (W^{r}_p)^{m}$ we obtain
\begin{align*}
\bigl\|(L(k)-L)y\bigr\|_{0,p}&\leq
\sum_{j=1}^r\bigl\|(A_{r-j}(\cdot,k)- A_{r-j}(\cdot))\bigr\|_{0,p}\,\|y\|_{\infty}\\
&\leq c_{r,p}'\sum_{j=1}^r\bigl\|(A_{r-j}(\cdot,k)- A_{r-j}(\cdot))\bigr\|_{0,p}\,\|y\|_{r,p};
\end{align*}
here, $c_{r,p}'$ is the norm of the bounded embedding operator $W^{r}_p\hookrightarrow L_{\infty}$. Hence, $\|L(k)-L\|\to0$ in the $n=0$ case as well. The implication $(\mathrm{III})\Rightarrow(\mathrm{II})$ is substantiated.

The implication $(\mathrm{II})\Rightarrow(\mathrm{I})$ is trivial.

Let us prove that $(\mathrm{I})\Rightarrow(\mathrm{III})$. Suppose  $(\mathrm{I})$ to be valid. Then, whatever $Y(\cdot)\in(W^{n+r}_p)^{m\times m}$, we have $L(k)Y\to LY$, i.e.
\begin{equation}\label{vusok p}
\sum\limits_{j=1}^r A_{r-j}(\cdot,k)Y^{(r-j)}(\cdot)\to
\sum\limits_{j=1}^r A_{r-j}(\cdot)Y^{(r-j)}(\cdot);
\end{equation}
in this paragraph, each convergence is considered in $(W^{n}_p)^{m\times m}$. Putting $Y(t):=I_m$ here, we get $A_{0}(\cdot,k)\to A_{0}(\cdot)$. Then, letting $Y(t):=tI_m$ in \eqref{vusok p}, we arrive at
\begin{equation*}
A_{1}(t,k)+A_{0}(t,k)t\to A_{1}(t)+A_{0}(t)t,
\end{equation*}
which implies that $A_{1}(\cdot,k)\to A_{1}(\cdot)$. Further, putting $Y(t):=t^{2}I_m$ in \eqref{vusok p}, we arrive at
\begin{equation*}
2A_{2}(t,k)+2A_{0}(t,k)t+A_{0}(t,k)t^{2}\to 2A_{2}(t)+2A_{0}(t)t+A_{0}(t)t^{2}
\end{equation*}
which yields $A_{2}(\cdot,k)\to A_{2}(\cdot)$. Continuing this process sequentially up to the $r$th step, we prove $(\mathrm{III})$. The implication $(\mathrm{I})\Rightarrow(\mathrm{III})$ is also substantiated.
\end{proof}

Given $k\in\mathbb{N}$ and $i\in\{1,\dots, r\}$, we let $Y_i(\cdot,k)$ denote a unique solution to the matrix Cauchy problem
\begin{align*}
Y_i^{(r)}(t,k)+\sum\limits_{j=1}^rA_{r-j}(t,k)Y_i^{(r-j)}(t,k)&=O_{m},\quad t\in (a,b),\\%\label{koshi matr v}
Y_i^{(j-1)}(a,k) &= \delta_{i,j}I_m,\quad j \in \{1,\dots, r\}. %\label{koshi matr vboun}
\end{align*}
Note that $Y_i(\cdot,k)\in(W^{n+r}_p)^{m\times m}$ due to Lemma~\ref{ob Koshi}. Recall that $Y_i(\cdot)\in(W^{n+r}_p)^{m\times m}$ is a unique solution to the matrix Cauchy problem \eqref{zad kosh1}, \eqref{zad kosh2}.

\begin{lemma}\label{ekviv rivnom}
Suppose that condition $(\mathrm{III})$ of Lemma \ref{ekviv sul} is satisfied. Then
\begin{equation}\label{rr77}
Y_i(\cdot,k) \rightarrow Y_i(\cdot) \quad\mbox{in}\quad (W^{n+r}_p)^{m\times m}
\end{equation}
for each $i\in\{1,\dots, r\}$.
\end{lemma}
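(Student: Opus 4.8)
The plan is to reduce the statement, exactly as in the proof of Lemma~\ref{ob Koshi}, to the $r=1$ case by the companion-matrix trick, and then to exploit the explicit integral representation of the solution of a first-order Cauchy problem together with the fact that $W^{n}_p$ is a Banach algebra. Concretely, for each fixed $i$ let $X_i(\cdot,k):=\operatorname{col}\bigl(Y_i(\cdot,k),Y_i'(\cdot,k),\dots,Y_i^{(r-1)}(\cdot,k)\bigr)$ and $X_i(\cdot):=\operatorname{col}\bigl(Y_i(\cdot),\dots,Y_i^{(r-1)}(\cdot)\bigr)$; these are $rm\times m$ matrix-valued functions solving the first-order matrix Cauchy problems $X_i'+K(\cdot,k)X_i=O$, $X_i(a,k)=\Delta_i$, and $X_i'+K(\cdot)X_i=O$, $X_i(a)=\Delta_i$, where $K(\cdot,k)$ and $K(\cdot)$ are the companion matrices built from $A_{r-j}(\cdot,k)$ and $A_{r-j}(\cdot)$ as in \eqref{AA-reduced}, and $\Delta_i$ is the constant block column with $I_m$ in the $i$-th slot and zeros elsewhere. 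Convergence \eqref{rr77} in $(W^{n+r}_p)^{m\times m}$ for every $i$ is equivalent to $X_i(\cdot,k)\to X_i(\cdot)$ in $(W^{n+r}_p)^{rm\times m}$ for every $i$, since the components of $X_i$ are precisely $Y_i$ and its derivatives up to order $r-1$, and the $(n+r)$-th derivative of $Y_i$ is recovered from the differential equation. So it suffices to prove the first-order statement: if $K(\cdot,k)\to K(\cdot)$ in $(W^{n}_p)^{rm\times rm}$ (which holds by hypothesis $(\mathrm{III})$ and the block structure of $K$), then the solution of the first-order matrix Cauchy problem with coefficient $K(\cdot,k)$ converges in $(W^{n+r}_p)$ to the solution with coefficient $K(\cdot)$.

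For the first-order statement I would argue by induction on $n\in\mathbb{N}\cup\{0\}$, mirroring the structure of Lemma~\ref{ob Koshi}. Write $Z(\cdot,k)$ for the resolving matrix of $X'+K(\cdot,k)X=O$, $Z(a,k)=I$, and $Z(\cdot)$ for that of the limit problem; then $X_i(\cdot,k)=Z(\cdot,k)\Delta_i$, so it is enough to show $Z(\cdot,k)\to Z(\cdot)$ in the appropriate Sobolev norm. In the base case $n=0$ one uses the integral equation $Z(t,k)=I-\int_a^t K(s,k)Z(s,k)\,ds$ together with $\|K(\cdot,k)-K(\cdot)\|_{0,p}\to 0$; a Gronwall-type estimate gives a uniform bound $\sup_k\|Z(\cdot,k)\|_{L_\infty}<\infty$, and then writing $Z(t,k)-Z(t)=-\int_a^t\bigl(K(s,k)-K(s)\bigr)Z(s,k)\,ds-\int_a^t K(s)\bigl(Z(s,k)-Z(s)\bigr)ds$ and applying Gronwall again yields $Z(\cdot,k)\to Z(\cdot)$ in $L_\infty$, whence $Z'(\cdot,k)=-K(\cdot,k)Z(\cdot,k)\to -K(\cdot)Z(\cdot)=Z'(\cdot)$ in $L_p$, i.e.\ convergence in $(W^1_p)$. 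For the inductive step, assume the claim for $n=k_0$; given $(\mathrm{III})$ at level $n=k_0+1$, in particular it holds at level $k_0$, so by the inductive hypothesis $Z(\cdot,k)\to Z(\cdot)$ in $(W^{k_0+1}_p)$; but then $Z'(\cdot,k)=-K(\cdot,k)Z(\cdot,k)$, and since $W^{k_0+1}_p$ is a Banach algebra and both $K(\cdot,k)\to K(\cdot)$ and $Z(\cdot,k)\to Z(\cdot)$ in $W^{k_0+1}_p$, the product converges in $W^{k_0+1}_p$; this upgrades $Z(\cdot,k)\to Z(\cdot)$ to convergence in $(W^{k_0+2}_p)$. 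Boundedness of multiplication in the Banach algebra $W^{k_0+1}_p$ is exactly what makes the product of two convergent sequences converge.

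Finally I would assemble the pieces: from $Z(\cdot,k)\to Z(\cdot)$ in $(W^{n+r}_p)^{rm\times rm}$ we get $X_i(\cdot,k)=Z(\cdot,k)\Delta_i\to Z(\cdot)\Delta_i=X_i(\cdot)$ in $(W^{n+r}_p)^{rm\times m}$ for each $i$ (here the index is $n+r$, not $n+1$, because the coefficient matrix $K$ lies only in $W^n_p$ while its resolving matrix gains $r$ derivatives through the recursion; one checks this by noting that the reduction raises the base regularity $n$ of Lemma~\ref{ob Koshi} to $n$ and the solution regularity to $n+r$, or more simply by re-running the induction with the correct target space). Reading off the first $m\times m$ block of $X_i(\cdot,k)$ gives $Y_i(\cdot,k)\to Y_i(\cdot)$ in $(W^{n+r}_p)^{m\times m}$, which is \eqref{rr77}. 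The main obstacle is organizing the Gronwall estimate so that it is uniform in $k$ — one needs $\sup_k\|K(\cdot,k)\|_{L_1}<\infty$, which follows from $(\mathrm{III})$ since a convergent sequence in $W^n_p\subset L_1$ is bounded — and, at the level of bookkeeping, keeping track of which Sobolev index ($n$ versus $n+r$) each object lives in while passing between the $r$-th order problem and its first-order reduction. Everything else is a routine application of the Banach-algebra property already used repeatedly in Section~\ref{section5}.
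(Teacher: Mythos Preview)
Your reduction to the first-order companion system and your inductive proof that $Z(\cdot,k)\to Z(\cdot)$ are essentially the paper's argument (the paper packages your induction/Gronwall step as a cited lemma asserting that the map $K\mapsto Z$ is a homeomorphism from $(W^n_p)^{rm\times rm}$ to a subset of $(W^{n+1}_p)^{rm\times rm}$). So the strategy is right.

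The genuine gap is your Sobolev-index bookkeeping in the last paragraph. The resolving matrix $Z$ of the \emph{first-order} problem $Z'+KZ=O$ with $K\in(W^n_p)^{rm\times rm}$ lies only in $(W^{n+1}_p)^{rm\times rm}$, not in $(W^{n+r}_p)^{rm\times rm}$: from $Z'=-KZ$ with $K\in W^n_p$ you gain exactly one derivative, and there is no ``recursion'' that gives more. In fact $X_i$ is \emph{not} in $(W^{n+r}_p)^{rm\times m}$ in general, since its last block is $Y_i^{(r-1)}\in W^{n+1}_p$, which need not lie in $W^{n+r}_p$. So your claimed equivalence ``$Y_i\to$ in $W^{n+r}_p\iff X_i\to$ in $W^{n+r}_p$'' is false, and your induction (which is correct) only delivers $Z(\cdot,k)\to Z(\cdot)$ in $W^{n+1}_p$.

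The fix, and this is exactly what the paper does, is to use \emph{all} the blocks of $X_i$, not just the first one. From $Z(\cdot,k)\to Z(\cdot)$ in $(W^{n+1}_p)^{rm\times rm}$ you read off $Y_i^{(j-1)}(\cdot,k)\to Y_i^{(j-1)}(\cdot)$ in $(W^{n+1}_p)^{m\times m}$ for every $j\in\{1,\dots,r\}$. The case $j=r$ alone gives convergence of $Y_i^{(r-1)},Y_i^{(r)},\dots,Y_i^{(n+r)}$ in $L_p$; the cases $j<r$ supply the lower derivatives. Together these yield $Y_i(\cdot,k)\to Y_i(\cdot)$ in $(W^{n+r}_p)^{m\times m}$. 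Replace your final paragraph with this observation and the proof is complete.
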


The proof of this lemma is based on the following result:

Given $\mu\in\mathbb{N}$, we let $\mathcal{Y}_{p,\mu}^{n+1}$ denote the set of all matrix-valued functions $\mathcal{X}(\cdot)\in (W_{p}^{n+1})^{\mu\times \mu}$ such that $\mathcal{X}(a)=I_{\mu}$ and that $\det\mathcal{X}(t)\neq 0$ for every $t\in[a,b]$. We endow $\mathcal{Y}_{p,\mu}^{n+1}$ with the metric
$$
d_{n+1,p}(\mathcal{X}(\cdot),\mathcal{Z}(\cdot)):=
\|\mathcal{X}(\cdot)-\mathcal{Z}(\cdot)\|_{n+1,p}.
$$

\begin{lemma}\label{th1}
Given $\mathcal{K}(\cdot)\in (W_p^{n})^{\mu\times \mu}$, we let  $\mathcal{X}(\cdot)$ denote a unique solution to the matrix Cauchy problem
\begin{equation*}\label{rr6}
\mathcal{X}'(t)+\mathcal{K}(t)\mathcal{X}(t)=O_\mu,\quad t\in (a,b), \qquad \mathcal{X}(a)=I_{\mu}.
\end{equation*}
Then the nonlinear mapping
\begin{equation*}%\label{rr6}
\mathcal{K}(\cdot)\mapsto\mathcal{X}(\cdot)
\end{equation*}
is a homeomorphism between the Banach space $(W_{p}^{n})^{\mu\times \mu}$ and the metric space $\mathcal{Y}_{p,\mu}^{n+1}$.
\end{lemma}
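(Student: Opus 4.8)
The plan is to prove Lemma~\ref{th1} by establishing that the map $\mathcal{K}\mapsto\mathcal{X}$ is a well-defined bijection with continuous inverse, and then showing continuity in the forward direction. First I would check that the map is well defined: by Lemma~\ref{ob Koshi} (applied with $m$ replaced by $\mu$, $r=1$, and zero right-hand side) the Cauchy problem $\mathcal{X}'+\mathcal{K}\mathcal{X}=O_\mu$, $\mathcal{X}(a)=I_\mu$ has a unique solution $\mathcal{X}(\cdot)\in(W_p^{n+1})^{\mu\times\mu}$, and since columns of a fundamental matrix are linearly independent we have $\det\mathcal{X}(t)\neq0$ for all $t\in[a,b]$; thus $\mathcal{X}(\cdot)\in\mathcal{Y}_{p,\mu}^{n+1}$. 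The inverse map is explicit: given $\mathcal{X}(\cdot)\in\mathcal{Y}_{p,\mu}^{n+1}$, differentiability and invertibility let us recover $\mathcal{K}(\cdot):=-\mathcal{X}'(\cdot)\mathcal{X}^{-1}(\cdot)$, and since $W_p^{n+1}$ and hence $W_p^{n}$ is a Banach algebra and $\mathcal{X}^{-1}\in(W_p^{n+1})^{\mu\times\mu}$ (the entries of $\mathcal{X}^{-1}$ are polynomials in the entries of $\mathcal{X}$ divided by $\det\mathcal{X}$, which is bounded away from zero and lies in $W_p^{n+1}$), we get $\mathcal{K}(\cdot)\in(W_p^{n})^{\mu\times\mu}$. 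This shows the two maps are mutually inverse bijections.

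Next I would prove continuity of the inverse map $\mathcal{X}\mapsto\mathcal{K}=-\mathcal{X}'\mathcal{X}^{-1}$. Convergence $\mathcal{X}_k\to\mathcal{X}$ in $(W_p^{n+1})^{\mu\times\mu}$ gives $\mathcal{X}_k'\to\mathcal{X}'$ in $(W_p^{n})^{\mu\times\mu}$, and one needs $\mathcal{X}_k^{-1}\to\mathcal{X}^{-1}$ in $(W_p^{n+1})^{\mu\times\mu}$. Uniform closeness of $\mathcal{X}_k$ to $\mathcal{X}$ (via the embedding $W_p^{n+1}\hookrightarrow C$) keeps $\det\mathcal{X}_k$ uniformly bounded away from $0$ for large $k$, so $\mathcal{X}_k^{-1}-\mathcal{X}^{-1}=\mathcal{X}_k^{-1}(\mathcal{X}-\mathcal{X}_k)\mathcal{X}^{-1}$ together with the Banach-algebra property of $W_p^{n+1}$ yields $\mathcal{X}_k^{-1}\to\mathcal{X}^{-1}$; then the product rule and the Banach-algebra property of $W_p^{n}$ give $\mathcal{K}_k\to\mathcal{K}$ in $(W_p^{n})^{\mu\times\mu}$.

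The remaining and most delicate part is continuity of the forward map $\mathcal{K}\mapsto\mathcal{X}$, i.e.\ showing that $\mathcal{K}_k\to\mathcal{K}$ in $(W_p^{n})^{\mu\times\mu}$ forces $\mathcal{X}_k\to\mathcal{X}$ in $(W_p^{n+1})^{\mu\times\mu}$. I would argue by induction on $n$. For $n=0$: from the integral representation $\mathcal{X}(t)=I_\mu-\int_a^t\mathcal{K}(s)\mathcal{X}(s)\,ds$ and the analogous one for $\mathcal{X}_k$, subtracting and using a Grönwall-type estimate (with $\|\mathcal{K}_k\|_{L_p}$ bounded) gives $\|\mathcal{X}_k-\mathcal{X}\|_{C}\to0$; then $\mathcal{X}_k'-\mathcal{X}'=-(\mathcal{K}_k\mathcal{X}_k-\mathcal{K}\mathcal{X})=-\mathcal{K}_k(\mathcal{X}_k-\mathcal{X})-(\mathcal{K}_k-\mathcal{K})\mathcal{X}\to0$ in $(L_p)^{\mu\times\mu}$, which with the uniform convergence of $\mathcal{X}_k$ gives convergence in $(W_p^{1})^{\mu\times\mu}$. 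For the inductive step $n=j\Rightarrow n=j+1$: the case-$j$ conclusion already gives $\mathcal{X}_k\to\mathcal{X}$ in $(W_p^{j+1})^{\mu\times\mu}$; differentiating the equation once more, $\mathcal{X}'=-\mathcal{K}\mathcal{X}$ lies in $(W_p^{j+1})^{\mu\times\mu}$ because $W_p^{j+1}$ is a Banach algebra, so $\mathcal{X}\in(W_p^{j+2})^{\mu\times\mu}$, and $\mathcal{X}_k'-\mathcal{X}'=-\mathcal{K}_k(\mathcal{X}_k-\mathcal{X})-(\mathcal{K}_k-\mathcal{K})\mathcal{X}\to0$ now in $(W_p^{j+1})^{\mu\times\mu}$ by the Banach-algebra estimates, hence $\mathcal{X}_k\to\mathcal{X}$ in $(W_p^{j+2})^{\mu\times\mu}$. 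Combining the two continuity statements completes the proof that the map is a homeomorphism. The main obstacle is handling the $n=0$ base case cleanly, since there one cannot use the Banach-algebra trick in $L_p$ directly and must instead pass through the uniform norm and a Grönwall estimate; everything above $n=0$ is then a routine bootstrap.
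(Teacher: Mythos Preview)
The paper does not actually prove Lemma~\ref{th1}; it simply states ``The proof of Lemma~\ref{th1} is given in \cite[Theorem~3]{AtlMikh2018}.'' So there is no in-paper argument to compare against, and your self-contained proof is a genuine addition rather than a duplication.

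Your argument is correct in outline and in detail, with one small wording issue. When you justify that $\mathcal{K}=-\mathcal{X}'\mathcal{X}^{-1}\in(W_p^{n})^{\mu\times\mu}$ by saying ``since $W_p^{n+1}$ and hence $W_p^{n}$ is a Banach algebra'', this fails verbatim for $n=0$, because $W_p^{0}=L_p$ is not a Banach algebra. The conclusion still holds there, but for the different reason that $\mathcal{X}^{-1}\in(W_p^{1})^{\mu\times\mu}\hookrightarrow(L_\infty)^{\mu\times\mu}$ while $\mathcal{X}'\in(L_p)^{\mu\times\mu}$, so the product lies in $(L_p)^{\mu\times\mu}$. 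The same remark applies to your continuity argument for the inverse map when $n=0$: you should replace the appeal to the Banach-algebra property of $W_p^{0}$ by the $L_\infty\cdot L_p\subset L_p$ estimate, exactly as you (correctly) do in the base case of the forward-direction induction. Once that is adjusted, the bijection, the continuity of $\mathcal{X}\mapsto\mathcal{K}$, the Gr\"onwall base case, and the bootstrap induction are all sound and constitute a complete proof.
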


The proof of Lemma \ref{th1} is given in \cite[Theorem~3]{AtlMikh2018}.

\begin{proof}[Proof of Lemma~\ref{ekviv rivnom}.]
If $r=1$, then this lemma is a direct consequence of Lemma~\ref{th1} considered for $\mu=m$.

Let us now treat the $r\geq2$ case. Using the block matrix-valued function \eqref{AA-reduced} of class $(W^{n}_p)^{rm\times rm}$, we reduce the matrix Cauchy problem \eqref{zad kosh1}, \eqref{zad kosh2} to a boundary-value problem for a system of first-order differential equations. Given $i\in \{1,\dots, r\}$, we let
$Z_i(\cdot)$ denote a unique solution to the matrix Cauchy problem
\begin{gather*}
Z'_i(t)+K(t)Z_i(t)=O_{rm\times m},\quad t\in (a,b),\\%\label{Z zad kosh1}
Z_i(a) = J_{i}:=\mathrm{col}\bigl(O_m, \dots, O_m, \underbrace{I_m}_{i}, O_m,\dots, O_m\bigr).%\label{Z_k^(j)(a)}
\end{gather*}
Here, of course, $O_{rm\times m}$ stands for the zero $rm\times m$ matrix, and the $rm\times m$ matrix $J_{i}$ is defined to consist of $r$ square blocks. Owing to Lemma~\ref{ob Koshi}, the solution $Z_i(\cdot)$ belongs to $(W^{n+1}_p)^{rm\times m}$. We write down it in the form
\begin{equation*}%\label{colonZ}
Z_i(\cdot):=\operatorname{col}(Z_{i,1}(\cdot), \dots, Z_{i,r}(\cdot)),
\end{equation*}
where $Z_{i,j}\in(W^{n+1}_p)^{m\times m}$ whenever $1\leq j\leq r$. We also put
\begin{equation*}
Z(\cdot):=\left(Z_1(\cdot),\ldots,Z_r(\cdot)\right)\in
(W^{n+1}_p)^{rm\times rm}
\end{equation*}
and observe that
\begin{align}
Z'(t)+K(t)Z(t)&=O_{rm},\quad t\in (a,b),
\label{ZZ zad kosh1}\\
Z(a)&= I_{rm}.\label{ZZ_k^(j)(a)}
\end{align}

The following result is known (see, e.g., \cite[Part~II, Section~2.6]{Cartan71}):

\begin{lemma}\label{vus i per}
The solution $Y_i(\cdot)$, with $i\in \{1,\dots, r\}$, of the Cauchy problem \eqref{zad kosh1}, \eqref{zad kosh2} relates to the solution $Z(\cdot)$ of the Cauchy problem \eqref{ZZ zad kosh1}, \eqref{ZZ_k^(j)(a)} by the formula
\begin{equation*}%\label{zvyuz}
Y_i^{(j-1)}(\cdot)=Z_{i,j}(\cdot)\quad\mbox{for each}\quad j\in\{1,\ldots,r\}.
\end{equation*}
\end{lemma}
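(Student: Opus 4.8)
The plan is to use the companion-form reduction that has already been set up in \eqref{AA-reduced}: reading the block rows of the first-order matrix system one at a time identifies the successive blocks of $Z_i$ with successive derivatives of its top block, while the bottom row reproduces the original $r$-th order equation; uniqueness for the Cauchy problem (Lemma~\ref{ob Koshi}) then completes the identification.

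First I would fix $i\in\{1,\dots,r\}$, recall that the Cauchy problem \eqref{ZZ zad kosh1}, \eqref{ZZ_k^(j)(a)} for $Z(\cdot)$ is equivalent, block column by block column, to the problems $Z_i'+K Z_i=O_{rm\times m}$, $Z_i(a)=J_i$, and expand $Z_i'(t)+K(t)Z_i(t)=O_{rm\times m}$ block-wise using the explicit form \eqref{AA-reduced}. For $1\le j\le r-1$ the $j$-th block equation reads $Z_{i,j}'(t)-Z_{i,j+1}(t)=O_m$, whence, by an obvious induction on $j$, $Z_{i,j}(\cdot)=Z_{i,1}^{(j-1)}(\cdot)$ for all $j\in\{1,\dots,r\}$. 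In particular the chain $Z_{i,1}^{(j-1)}=Z_{i,j}\in(W^{n+1}_p)^{m\times m}$ shows that $Z_{i,1}(\cdot)\in(W^{n+r}_p)^{m\times m}$, so it is an admissible solution in the Sobolev scale. The last ($r$-th) block equation reads $Z_{i,r}'(t)+\sum_{l=0}^{r-1}A_l(t)Z_{i,l+1}(t)=O_m$; substituting $Z_{i,l+1}=Z_{i,1}^{(l)}$, using $Z_{i,r}'=Z_{i,1}^{(r)}$, and reindexing via $l=r-j$ turns it into
\[
Z_{i,1}^{(r)}(t)+\sum_{j=1}^{r}A_{r-j}(t)Z_{i,1}^{(r-j)}(t)=O_m,\quad t\in(a,b),
\]
which is exactly the matrix differential equation \eqref{zad kosh1} for $Z_{i,1}$.

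Next I would read off the initial data. The condition $Z_i(a)=J_i$ means precisely $Z_{i,j}(a)=\delta_{i,j}I_m$ for each $j$, and since $Z_{i,j}(a)=Z_{i,1}^{(j-1)}(a)$ this is the system of initial conditions \eqref{zad kosh2} with the unknown $Z_{i,1}$ in place of $Y_i$. Hence $Z_{i,1}$ solves the same matrix Cauchy problem \eqref{zad kosh1}, \eqref{zad kosh2} as $Y_i$; by the uniqueness part of Lemma~\ref{ob Koshi} (applied to the block matrix $K$ of \eqref{AA-reduced}, equivalently to the $r$-th order equation directly) we obtain $Z_{i,1}=Y_i$, and therefore $Z_{i,j}=Z_{i,1}^{(j-1)}=Y_i^{(j-1)}$ for every $j\in\{1,\dots,r\}$, as asserted.

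There is no genuine obstacle here; the statement is classical. The only points requiring attention are purely notational — keeping the block index $j$ and the coefficient index $r-j$ in step with the conventions of \eqref{AA-reduced}, \eqref{zad kosh1}, and \eqref{zad kosh2} — together with the minor observation that the regularity $Z_{i,1}\in(W^{n+r}_p)^{m\times m}$ must be recorded before invoking uniqueness in that class. All the substantive analytic content (existence and uniqueness of $Z_i$ and of $Y_i$ in the Sobolev scale) is already furnished by Lemma~\ref{ob Koshi}.
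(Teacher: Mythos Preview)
Your argument is correct; it is the standard companion-form reduction, and the bookkeeping with the block indices and with the Sobolev regularity of $Z_{i,1}$ is handled carefully.

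Note, however, that the paper does not give its own proof of this lemma: it states the result as ``known'' and refers to Cartan~\cite{Cartan71}, Part~II, Section~2.6. So there is no paper proof to compare against; your write-up supplies exactly the classical verification that the citation points to, and nothing further is needed.
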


Given $k\in\mathbb{N}$, we let $K(\cdot,k)$ denote the matrix-valued function \eqref{AA-reduced} in which every $A_{r-j}(\cdot)$, with $j=r,\ldots,1$, is replaced with $A_{r-j}(\cdot,k)$. Changing $K(t)$ for $K(t,k)$ in the Cauchy problem \eqref{ZZ zad kosh1}, \eqref{ZZ_k^(j)(a)}, we consider its unique solution $Z(t,k)$ with the corresponding components $Z_{i,j}(t,k)$. By Lemma~\ref{vus i per},
\begin{equation*}
Y_i^{(j-1)}(\cdot,k)=Z_{i,j}(\cdot,k)\quad\mbox{for each}\quad i,j\in\{1,\ldots,r\}.
\end{equation*}
Since condition $(\mathrm{III})$ is satisfied, we have $K(\cdot,k)\to K(\cdot)$ in $(W^{n}_{p})^{rm\times rm}$. Hence, $Z(\cdot,k)\to Z(\cdot)$
in $(W^{n+1}_{p})^{rm\times rm}$ by Lemma~\ref{th1} for $\mu=rm$. This implies by Lemma~\ref{vus i per} that
\begin{equation*}
Y_i^{(j-1)}(\cdot,k)\to Y_i^{(j-1)}(\cdot)
\quad\mbox{in}\quad (W^{n+1}_{p})^{m\times m}
\end{equation*}
for each $i,j\in\{1,\ldots,r\}$, which  entails the required formula~\eqref{rr77}.
\end{proof}

\begin{proof}[Proof of Theorem \ref{koef matr}.] Suppose that $(L(k),B(k))\xrightarrow{s}(L,B)$. Then, by Lem\-ma~\ref{ekviv sul}, condition (III) is satisfied. This implies \eqref{rr77} due to Lemma~\ref{ekviv rivnom}. Therefore, $[B(k)Y_i(k)]\to[B(k)Y(k)]$ for each $i\in\{1,\ldots,k\}$, which yields $M(L(k),B(k))\rightarrow M(L,B)$, as was to be proved.
\end{proof}

\begin{proof}[Proof of Theorem \ref{ker coker}.] We suppose that condition \eqref{zb LB11} is satisfied. Then $M(L(k),B(k))\rightarrow M(L,B)$ due to Theorem~\ref{koef matr}. Put $\varrho:=\operatorname{rank}M(L,B)$ so that there exists a nonzero minor of order $\varrho$ of the matrix $M(L,B)$. Hence, the same minor (of order $\varrho$) of the matrix $M(L(k),B(k))$ is nonzero whenever $k\geq1$. Therefore,
$$
\varrho_{k}:=\operatorname{rank}M(L(k),B(k))\geq \varrho
\quad\mbox{whenever}\quad k\geq1.
$$
Hence,
\begin{equation*}
\dim\ker M(L(k),B(k))=rm-\varrho_{k}\leq rm-\varrho=\dim\ker M(L,B)
\end{equation*}
and
\begin{equation*}
\dim\operatorname{coker}M(L(k),B(k))=l-\varrho_{k}\leq l-\varrho= \dim\operatorname{coker}M(L,B)
\end{equation*}
for all sufficiently large $k$. This implies the required formulas \eqref{ner ker} and \eqref{ner coker} in view of Theorem~\ref{th dimker}.
\end{proof}

\section{Appendix}\label{section8}

Let $E_{1}$ and $E_{2}$ be infinite-dimensional complex or real Banach spaces, and suppose that at least one of them has a Schauder basis. Let  $\mathcal{B}(E_{1},E_{2})$ denote the Banach space of all bounded linear operators from $E_{1}$ to $E_{2}$. Then the set of all finite-dimensional operators of class $\mathcal{B}(E_{1},E_{2})$ is sequentially dense in $\mathcal{B}(E_{1},E_{2})$ in the strong operator topology.

Indeed, suppose $E_{1}$ to have a Schauder basis, and let $P_n$, with $n\in \mathbb{N}$, denote the projector of $E_{1}$ onto the linear span of the first $n$ elements of the basis. Then $P_n \xrightarrow{s} I_1$, with each convergence being considered as $n\to\infty$ in Appendix. Here, of course, $I_1$ stands for the identity operator on $E_{1}$. Therefore, $TP_{n}\xrightarrow{s}T$ for every $T\in\mathcal{B}(E_{1},E_{2})$. Each operator $TP_{n}$ is finite-dimensional because $\dim((TP_{n})(E_{1}))\leq n<\infty$. Moreover, the kernel and co-kernel of $TP_{n}$ are infinite-dimensional so that their dimensions do not depend on $\dim\ker T$ and $\dim\operatorname{coker}T$.

The case where $E_{2}$ has a Schauder basis is similarly considered. Let $Q_n$ be the projector of $E_{2}$ onto the linear span of the first $n$ elements of the basis. Then $Q_{n}T\xrightarrow{s}T$ for every $T\in\mathcal{B}(E_{1},E_{2})$, with each operator $Q_{n}T$ being finite-dimensional. The kernel and co-kernel of $Q_{n}T$ are infinite-dimensional.

\small Institute of Mathematics of the National Academy of Sciences of Ukraine \\
Tereshchenkivska Str. 3, 01024 Kyiv, Ukraine; \\
Institute of Mathematics of the Czech Academy of Sciences, \\
Zitna Str. 25, 115 67 Prague, Czech Republic\\
ORCID: 0000-0002-1332-1562, 
vladimir.mikhailets@gmail.com

\vspace{+0.4cm}

\small Institute of Mathematics of the National Academy of Sciences of Ukraine \\
Tereshchenkivska Str. 3, 01024 Kyiv, Ukraine; \\
Institute of Mathematics of the Czech Academy of Sciences, \\
Zitna Str. 25, 115 67 Prague, Czech Republic\\
ORCID: 0000-0003-0186-3185, hatlasiuk@gmail.com

\end{document}